\documentclass[a4paper,11pt]{article}
\usepackage{graphicx,xcolor}
\usepackage{float}
\usepackage{amsmath,amssymb,bm,amsthm}
\usepackage{ascmac}
\usepackage{array}
\usepackage{hyperref}
\usepackage[hang,small,bf]{caption}
\usepackage[subrefformat=parens]{subcaption}
\usepackage{natbib}
\usepackage{enumitem}

\bibpunct[:]{(}{)}{,}{a}{}{,}
\hypersetup{
  colorlinks=true,
	citecolor=blue,
	linkcolor=red,
	urlcolor=orange,
}
\title{Asymptotic Expansion of the Kallianpur–Striebel Formula under Perturbations of Linear Models}
\author{Masahiro Kurisaki\thanks{AIP Center, RIKEN\protect\\ email: \texttt{masahiro.kurisaki@riken.jp}}~\thanks{Japan Science and Technology Agency CREST \protect\\ The author was supported by Japan Science and Technology Agency CREST JPMJCR2115, and JSPS KAKENHI Grant Number JP24KJ0667.}}

\numberwithin{equation}{section}
\allowdisplaybreaks[3]
\newtheorem{theorem}{Theorem}[section]
\newtheorem{proposition}[theorem]{Proposition}
\newtheorem{lemma}[theorem]{Lemma}

\theoremstyle{definition}
\newtheorem{definition}[theorem]{Definition}

\theoremstyle{remark}
\newtheorem{remark}{Remark}

\begin{document}
\maketitle
\begin{abstract}
  In this paper, we study a nonlinear state-space model represented as a perturbation of a linear model and investigate an asymptotic expansion of the nonlinear filter with respect to the perturbation parameter. The conditional expectation of the hidden state given the observations admits a closed-form representation via the Kallianpur–Striebel formula. We provide a rigorous justification of the expansion of this representation in probability under very general assumptions.

  Our framework applies not only to nearly linear models but also to more general situations, including nonlinear systems with small system noise. In these cases, the coefficients of the resulting expansion can be computed through systems of ordinary differential equations. Although the explicit computational procedure will be presented in a subsequent paper, the present work establishes the theoretical foundation of our asymptotic expansion approach to nonlinear filtering.
\end{abstract}

\begin{keywords}
  Nonlinear filtering, Kallianpur-Striebel formula, asymptotic expansion, perturbation methods, change of measure
\end{keywords}

\section{Introduction}

In probability theory, the filtering problem refers to the problem of calculating the conditional expectation $E[X_t|\mathcal{Y}_t]$ for an unobservable process $\{X_t\}_{t\geq 0}$, and the filtration $\{\mathcal{Y}_t\}_{t\geq 0}$ generated by an observation process $\{Y_t\}_{t\geq 0}$. In the standard continuous-time setting, $X$ and $Y$ are typically given by the solution of stochastic differential equations
\begin{align*}
  &dX_t=\alpha(X_t)dt+\beta(X_t)dV_t,\\
  &dY_t=h(X_t)dt+\sigma(t)dW_t,
\end{align*}
where $V$ and $W$ are independent Brownian motions, and $\alpha$, $\beta$ and $h$ are some known functions.

In general, the above filtering problem does not admit a closed-form solution, and the conditional distribution of $X_t$ given $\mathcal{Y}_t$ is infinite-dimensional. An important exception is the linear Gaussian case, where equations of $X$ and $Y$ are given in the form
\begin{align*}
  &dX_t=a(t)X_t dt + b(t)dV_t,\\
  &dY_t=c(t)X_t dt+\sigma(t)dW_t.
\end{align*}
In this case, the conditional distribution of $X_t$ given $\mathcal{Y}_t$ is Gaussian, and the conditional mean and covariance are given as the recursive formula according to the Kalman-Bucy filter \citep{Liptser2001-2,doi:10.1137/S0040585X97T99037X,RUTKOWSKI1993377}.

In the general nonlinear case, it is difficult to obtain an analytical solution to the
filtering problem, and various approximation methods have therefore been proposed.
For instance, the extended Kalman filter and its variants \citep{picard1991, 882463,847749,2003OcDyn..53..343E} rely on local linear or Gaussian
approximations, PDE-based approaches \citep{lototsky2011chaos} aim at approximating the associated stochastic partial
differential equations, and particle filters \citep{4378823,6530707} approximate the conditional distribution through Monte Carlo simulations.

Another important direction in nonlinear filtering is the perturbation approach, where one studies the asymptotic behaviour of the filter around a tractable reference model.

For example, \citet{picard1986} analysed the high signal-to-noise regime and derived finite-dimensional approximations of the filter in the small-noise limit. Although rigorous error bounds are obtained, the method provides a specific approximation rather than a systematic expansion, and it is restricted to one-dimensional models with strong structural assumptions such as the injectivity of the observation function.

On the other hand, \citet{fujii2014momentum} proposed a momentum-space asymptotic expansion based on the Zakai equation and Fourier techniques. While this approach yields a recursive structure for the expansion, its mathematical justification remains largely open. Moreover, the resulting scheme requires solving a system of ordinary differential equations for each Fourier mode, and therefore does not fully overcome the curse of dimensionality.

Another line of research on asymptotic expansions for conditional expectations was developed in \citet{YOSHIDA200353, MASUDA200437}. These works derive asymptotic expansions of the joint distribution of
\[
  (X_{t_1}^\epsilon,\cdots,X_{t_m}^\epsilon,Y_{s_1}^\epsilon,\cdots,Y_{s_n}^\epsilon),
\]
where $t_1,\cdots,t_m,s_1,\cdots,s_n$ are fixed time points. Based on this joint density expansion, one can approximate conditional expectations of the form
\[
  E[\phi(X_{t_1}^\epsilon,\cdots,X_{t_m}^\epsilon)\mid Y_{s_1}^\epsilon,\cdots,Y_{s_n}^\epsilon].
\]
This approach provides an efficient method for computing conditional expectations given finite-dimensional observations. However, it is different in nature from the standard recursive filtering framework, where the estimate is updated sequentially as new observations arrive.

In this paper, we develop a different asymptotic expansion framework for nonlinear filtering. While our approach is related in spirit to these earlier expansion-based methods, it is based on a direct expansion of the Kallianpur--Striebel formula rather than on an expansion of the finite-dimensional joint density. This formulation is better suited to the recursive nature of continuous-time filtering, and our main objective is to establish a fully rigorous justification of the expansion under general assumptions.

In our setting, we consider the linear signal and perturbed observation model
\begin{align*}
dX_t &= a(t)X_t dt + b(t)dV_t,\\
dY_t^\epsilon &= \left\{ c(t)X_t + \sum_{i=1}^\infty H_t^{[i]} \epsilon^i \right\}dt + \sigma(t)dW_t,
\end{align*}
where $\epsilon>0$ is a small parameter. The objective of this work is to provide a rigorous justification of the asymptotic expansion
\begin{align}
\label{eq-expansion-intro}
E[f(X_t)\mid \mathcal{Y}_t^\epsilon]
= m_t^{[0]} + m_t^{[1]}\epsilon + \cdots + m_t^{[n]}\epsilon^n+O(\epsilon^{n+1}),
\end{align}
where ${\mathcal{Y}_t^\epsilon}$ denotes the filtration generated by ${Y_t^\epsilon}$.

The motivation for this formulation is twofold. First, a wide class of nonlinear filtering problems can be reduced to this framework through suitable perturbative transformations. In particular, nonlinear small-noise models of the form
\begin{align*}
dX_t^\epsilon &= \alpha(X_t^\epsilon)dt + \epsilon \beta(X_t^\epsilon)dV_t,\\
dY_t^\epsilon &= h(X_t^\epsilon)dt + \sigma(t)dW_t,
\end{align*}
with smooth nonlinear coefficients can be treated within our framework, as shown in the next section.

Second, the Kallianpur–Striebel formula provides a natural representation of the filter in terms of weighted conditional expectations. Formally expanding this representation yields explicit expressions for the expansion coefficients, which can be computed recursively by finite-dimensional ordinary differential equations. This feature leads to a tractable and implementable framework, and will be developed in a subsequent work.

However, providing a rigorous justification of the expansion remains nontrivial. The main difficulty lies in controlling the remainder term in \eqref{eq-expansion-intro}. The martingale process appearing in the Kallianpur–Striebel formula typically possesses only an $L^1$-property, and establishing $\epsilon$-uniform estimates for the weighted conditional expectations requires delicate analysis. This difficulty has not been fully addressed in the existing expansion-based approaches. 

For example, \citet{fujii2014momentum} studies a similar type of expansion, and discusses it mainly at a formal level. The paper explicitly notes that providing a rigorous justification is beyond the scope of that work. This reflects the intrinsic difficulty of rigorous justification of expansion-based approaches to nonlinear filtering.

The primary contribution of this paper is to overcome this difficulty and to establish a mathematically rigorous probabilistic error estimate under mild and general conditions. This result provides a solid theoretical foundation for systematic perturbation methods in nonlinear filtering. 

The rest of this paper is organized as follows. In Section \ref{section-framework}, we present our framework and assumptions precisely, and a motivating example is given in Section \ref{section-example}. This example illustrates that our framework accommodates general nonlinear models with small system noise.

Section \ref{section:asymptotic-expansion} is devoted to presenting an asymptotic expansion formula for the conditional expectation. We first recall the representation formula of the conditional expectation via the Kallianpur–Striebel formula and consider the expansion of the exponential martingale in Section \ref{section:KS-formula}. A rigorous estimate for the remainder term is then established in Section \ref{section:estimation-residue}.

\section{Framework and examples}\label{section-framework}
Let $(\Omega,\mathcal{F},\{\mathcal{F}_t\}_{t\geq 0},P)$ be a filtered probability space, and we consider the following perturbed linear system for \( 0 \leq \epsilon < 1 \):
\begin{align}
  \label{eq-X-linear}dX_t &= a(t) X_t \, dt + b(t) \, dV_t, \\
  \label{eq-Y-epsilon}dY_t^\epsilon &= H_t^\epsilon \, dt + \sigma(t) \, dW_t,~Y_0=0,
\end{align}
where \( a \), \( b \), \( c \), and \( \sigma \) are measurable functions taking values in \( M_{d_1}(\mathbb{R}) \), \( M_{d_1,m_1}(\mathbb{R}) \), \( M_{d_2,d_1}(\mathbb{R}) \), and \( M_{d_2,m_2}(\mathbb{R}) \), respectively. Here, \( \{V_t\} \) and \( \{W_t\} \) are independent \( m_1 \)- and \( m_2 \)-dimensional \( \{\mathcal{F}_t\} \)-Brownian motions. We also assume that \( X_0 \) is normally distributed and independent of \( \{V_t\} \) and \( \{W_t\} \), so that \( \{(X_t, Y_t)\} \) forms a \( d_1 + d_2 \)-dimensional Gaussian process.

We also assume that \( H_t^\epsilon \) is a \( d_2 \)-dimensional progressively measurable process and is assumed to admit an expansion
\begin{align*}
  H_t^\epsilon = c_0(t)+c(t) X_t + \sum_{i=1}^{n-1} H_t^{[i]} \epsilon^i + r_t^{\epsilon, n}
\end{align*}
for every $n \in \mathbb{N}$.

Moreover, we assume the following conditions.
\begin{enumerate}[label=\textbf{[A\arabic*]}]
  \item for every \( t \geq 0 \),
\begin{align}
  \label{eq-assumption-a} &\int_0^t |a(s)| \, ds < \infty, \\
  \label{eq-assumption-b} &\int_0^t |b(s)|^2 \, ds < \infty, \\
  \label{eq-assumption-c0} &\int_0^t |c_0(s)| \, ds < \infty,\\
  \label{eq-assumption-c} &\int_0^t |c(s)|^2 \, ds < \infty, \\
  \label{eq-assumption-sigma-1} &\int_0^t |\sigma(s)|^2 \, ds < \infty,
\end{align}
and that there exists a constant \( C > 0 \) such that for every \( t \geq 0 \),
\begin{align}
  \label{eq-assumption-sigma-2} \lambda_{\min}(\sigma(t) \sigma(t)^\top) > C,
\end{align}
where \( \lambda_{\min}(\sigma(t) \sigma(t)^\top) \) denotes the smallest eigenvalue of \( \sigma(t) \sigma(t)^\top \).
Here, $|A|$ denotes the Frobenius norm for a matrix $A$. \label{assumption-A0}
  \item For any $t\geq 0$, there exists $p\geq 1$, $C>0$ and $\delta>0$ such that 
  \begin{align*}
    \int_0^t E[|H_s^{\epsilon_1}-H_s^{\epsilon_2}|^p]ds \leq C|\epsilon_1-\epsilon_2|^{1+\delta}
  \end{align*}
  for any $\epsilon_1,\epsilon_2 \in [0,1)$.\label{assumption-A1}
  \item For each $i=1,2,\cdots$, $\{H_t^{[i]}\}_{t\geq 0}$ is a progressively measurable process which is independent of $\{W_t\}_{t\geq 0}$, and satisfies
  \begin{align*}
    \int_0^t E[|H_s^{[i]}|^2] ds<\infty
  \end{align*}
  for every $t\geq 0$.\label{assumption-A2}
  \item For $0<\epsilon<1$, $\{r_t^{n,\epsilon}\}_{t\geq 0}$ is a progressively measurable process which is independent of $\{W_t\}_{t\geq 0}$ and satisfies  for some $p\geq 2$
  \begin{align*}
    \sup_{0< \epsilon <1} \frac{1}{\epsilon^{n}}E\left[\sup_{0\leq s\leq t}|r_s^{n,\epsilon}|^p\right]^\frac{1}{p} <\infty
  \end{align*}
  for every $t\geq 0$ and $n \in \mathbb{N}$.\label{assumption-A3}
\end{enumerate}

Furthermore, let \( \{U_t^\epsilon\}_{t \geq 0,~0 \leq \epsilon < 1} \) be a family of random variables, and assume the following conditions.
\begin{enumerate}[label=\textbf{[A\arabic*]}] \setcounter{enumi}{4} 
  \item For any $n=0,1,2,\cdots$, $U_t^\epsilon$ has the expression 
  \begin{align*}
    U_t^\epsilon=\sum_{i=0}^{n-1}U_t^{[i]}\epsilon^i+u_t^{n,\epsilon},
  \end{align*}
  where each $U_t^{[i]}$ and $u_t^{n,\epsilon}$ are independent of $\{W_s\}_{0\leq s\leq t}$, and satisfy
  \begin{align*}
    &\sup_{0\leq s\leq t}E[|U_s^{[i]}|^p]<\infty,\\
    &\sup_{0< \epsilon <1}\sup_{0\leq s\leq t} \frac{1}{\epsilon^{n}}E\left[|u_s^{n,\epsilon}|^p\right]^\frac{1}{p} <\infty
  \end{align*}
  for every $t\geq 0$, $i=0,1,2,\cdots$, $n \in \mathbb{N}$ and $p\geq 1$.\label{assumption-A6}
\end{enumerate}

The goal of this work is to provide a rigorous justification for an asymptotic expansion of the conditional expectation $E[U_t^\epsilon \mid \mathcal{Y}_t^\epsilon]$ where $\{\mathcal{Y}_t^\epsilon\}$ is the usual augmentation with null sets of the filtration generated by $\{Y_t^\epsilon\}$. Before proceeding to the expansion formula, we present a motivating example of our framework.

\section{Motivating Example}\label{section-example}
Let $\{X_{t}^\epsilon\}_{t\geq 0}$ and $\{Y_t^\epsilon\}$ be a $d_1$ and $d_2$-dimensional processes satisfying
\begin{align}
  \label{eq-eta}&dX_t^\epsilon=\alpha(X_t^\epsilon)dt+\epsilon \beta(X_t^\epsilon)dV_t,~~X_0=x_0,\\
  \label{eq-Y-theta}&dY_t^\epsilon=h(X_t^\epsilon)dt+\sigma(t)dW_t,~~Y_0=0.
\end{align}
where $\alpha:\mathbb{R}^{d_1}\to \mathbb{R}^{d_1}$, $\beta:\mathbb{R}^{d_1}\to M_{d_1,m_1}(\mathbb{R})$ and $h:\mathbb{R}^{d_1}\to \mathbb{R}^{d_2}$ are of class $C^\infty$, and $x_0 \in \mathbb{R}^{d_1}$ is a constant. 

Furthermore, we assume that for any $k \in \mathbb{Z}_+$
\begin{align*}
  \sup_{x \in \mathbb{R}^{d_1}}|\partial^k\alpha(x)|+\sup_{x \in \mathbb{R}^{d_1}}|\partial^k\beta(x)|<\infty,
\end{align*}
and there exists a constant $C$ and $q$ such that
\begin{align}
  \label{eq-assumption-h}\sup_{x \in \mathbb{R}^{d_1}}|\partial^kh(x)|\leq C(1+|x|^q).
\end{align}
Here, for $\phi \in C^k(\mathbb{R}^m; \mathbb{R}^n)$, the $k$-th derivative $\partial^k \phi(x)$ is regarded as an $\mathbb{R}^n$-valued $k$-tensor whose $(i_1,\ldots,i_k)$-th component is given by
  \begin{align*}
      \frac{\partial}{\partial x_{i_1}} \cdots \frac{\partial}{\partial x_{i_k}} \phi(x_1, \cdots, x_m).
  \end{align*}  
For $t>0$ and $p \ge 1$, introduce the norm
\begin{align*}
  \|\xi\|_{p,t}=E\left[ \sup_{0\leq s\leq t}|\xi_s|^p \right]^\frac{1}{p}.
\end{align*}   
for any $\mathbb{R}^{d_1}$-valued continuous process $\xi$.
Then, the following result holds.

\begin{proposition}\label{prop-expansion-of-SDE}
  For any fixed $t>0$ and $p \ge 1$, the mapping $\epsilon \mapsto X^\epsilon$ is continuously differentiable in the Banach space of continuous processes endowed with the norm $\|\cdot\|_{p,t}$. 
  
  Moreover, the derivatives $X_t^{[k],\epsilon} := \partial_\epsilon^k X_t^\epsilon~(k\geq 1)$ satisfy the stochastic differential equation obtained by formal differentiation of \eqref{eq-eta}:
 \begin{align*}
  &dX_t^{[k],\epsilon}=\sum_{j=1}^k \sum_{(i_1,\cdots,i_j)\in \Lambda(k,j)}\nu(k,(i_1,\cdots,i_j))\partial^{j}\alpha(X_t^\epsilon)[X_t^{[i_1],\epsilon}\otimes \cdots \otimes X_t^{[i_j],\epsilon}]dt\\
  &+\epsilon\sum_{j=1}^k \sum_{(i_1,\cdots,i_j)\in \Lambda(k,j)}\nu(k,(i_1,\cdots,i_j))\partial^{j}\beta(X_t^\epsilon)[X_t^{[i_1],\epsilon}\otimes \cdots \otimes X_t^{[i_j],\epsilon}]dV_t\\
  &+k\sum_{j=1}^{k-1} \sum_{(i_1,\cdots,i_j)\in \Lambda(k-1,j)}\nu(k-1,(i_1,\cdots,i_j))\partial^{j}\beta(X_t^\epsilon)[X_t^{[i_1],\epsilon}\otimes \cdots \otimes X_t^{[i_j],\epsilon}]dV_t.
\end{align*}
with $X_0^{[k],\epsilon}=0$. Here,
\begin{align*}
  \Lambda(k,j)=\left\{ (i_1,\cdots,i_j)\in \mathbb{N}^j;i_1\leq \cdots \leq i_j,~i_1+\cdots+i_j=k  \right\},
\end{align*}
and $\nu(k,(i_1,\cdots,i_j))$ represents the number of partitions of the set $\{1,\cdots,k\}$ into $j$ subsets with $(i_1,\cdots,i_j)$ elements, and 
\begin{align*}
  &\partial^{j}\alpha(x_1,\cdots,x_{d_1})[\xi^1\otimes \cdots \otimes \xi^k]\\
  =&\sum_{i_1,\cdots,i_k=1}^{d_1}\frac{\partial}{\partial x_{i_1}} \cdots \frac{\partial}{\partial x_{i_k}} \alpha(x_1, \cdots, x_{d_1})\xi_{i_1}^1\cdots \xi_{i_k}^k
\end{align*}
for $\xi^{i}=(\xi_1^i,\cdots,\xi_{d_1}^i)~(i=1,\cdots,k)$.
\end{proposition}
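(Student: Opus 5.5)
The plan is to prove differentiability by induction on $k$. For every $p \ge 1$ and every $k$ we establish three facts: the candidate derivative processes $X^{[k],\epsilon}$ solving the displayed system exist and are unique; the moment bounds $\sup_{\epsilon\in[0,1)}\|X^{[k],\epsilon}\|_{p,t}<\infty$ hold; and the difference quotients $(X^{[k-1],\epsilon+\eta}-X^{[k-1],\epsilon})/\eta$ converge to $X^{[k],\epsilon}$ in $\|\cdot\|_{p,t}$, with $\epsilon\mapsto X^{[k],\epsilon}$ continuous in $\|\cdot\|_{p,t}$.

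Existence is straightforward. Since $\alpha,\beta$ have bounded derivatives of all orders, \eqref{eq-eta} has a unique strong solution, and $\sup_\epsilon\|X^\epsilon\|_{p,t}<\infty$ by the Burkholder--Davis--Gundy (BDG) inequality and Gronwall's lemma. The equation for $X^{[k],\epsilon}$ is linear in the unknown $X^{[k],\epsilon}$. In the sum over $j$, the only term containing $X^{[k],\epsilon}$ is $j=1$, $i_1=k$, $\nu=1$, giving $\partial\alpha(X^\epsilon)X^{[k],\epsilon}\,dt+\epsilon\,\partial\beta(X^\epsilon)X^{[k],\epsilon}\,dV_t$, with bounded coefficients. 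The remaining terms are forcing terms: polynomials in $X^{[1],\epsilon},\dots,X^{[k-1],\epsilon}$ multiplied by bounded functions $\partial^j\alpha(X^\epsilon)$ and $\partial^j\beta(X^\epsilon)$. By the induction hypothesis these forcing terms lie in $\|\cdot\|_{p,t}$ for every $p$, using H\"older's inequality across the products. So $X^{[k],\epsilon}$ exists uniquely, and BDG plus Gronwall give the uniform moment bound.

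The core step is the difference quotient. Set $D^\eta_t=(X^{\epsilon+\eta}_t-X^\epsilon_t)/\eta$ in the case $k=1$. Writing $\alpha(X^{\epsilon+\eta})-\alpha(X^\epsilon)=\int_0^1\partial\alpha(X^\epsilon+\theta(X^{\epsilon+\eta}-X^\epsilon))d\theta\,(X^{\epsilon+\eta}-X^\epsilon)$, and similarly for $\beta$, we get
\begin{align*}
dD^\eta_t=A^\eta_t D^\eta_t\,dt+(\epsilon+\eta)B^\eta_t D^\eta_t\,dV_t+\beta(X^\epsilon_t)\,dV_t,
\end{align*}
with $A^\eta\to\partial\alpha(X^\epsilon)$ and $B^\eta\to\partial\beta(X^\epsilon)$ boundedly. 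The Lipschitz estimate $\|X^{\epsilon+\eta}-X^\epsilon\|_{p,t}\le C|\eta|$ follows from the same Gronwall argument. Subtracting the equation of $X^{[1],\epsilon}$ and applying BDG and Gronwall, we obtain $\|D^\eta-X^{[1],\epsilon}\|_{p,t}\to0$. The errors $(A^\eta-\partial\alpha)X^{[1],\epsilon}$ vanish by dominated convergence, using the uniform higher moments.

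For general $k$ the same scheme applies to $X^{[k-1],\epsilon}$. Differentiating the coefficient of the equation for $X^{[k-1],\epsilon}$ in $\epsilon$ produces three contributions. Differentiating $\partial^j\alpha(X^\epsilon)$ raises $j$ by one and inserts $X^{[1]}$. Differentiating some $X^{[i_l]}$ raises $i_l$ by one. Differentiating the explicit factor $\epsilon$ in front of the $\beta$-terms produces the last line of the display, accounting for the factor $k$ there.

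Summing these contributions gives the stated equation, and this bookkeeping is the main obstacle. One must check the combinatorial identity that these contributions recombine with multiplicities $\nu(k,\cdot)$. This is exactly the recursion of Fa\`a di Bruno's formula: the partitions of $\{1,\dots,k\}$ are obtained from those of $\{1,\dots,k-1\}$ either by adding $k$ as a new singleton block or by inserting it into an existing block. For the explicit $\epsilon\,g(\epsilon)$ factor, Leibniz's rule $\partial^k(\epsilon g)=\epsilon g^{(k)}+k g^{(k-1)}$ gives the $k$-coefficient, with $g^{(k-1)}$ expanded by Fa\`a di Bruno at order $k-1$.

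Analytically, the only new point is that the remainders in the mean-value expansions are polynomial in lower-order quotients. Their convergence in $\|\cdot\|_{p,t}$ follows from the inductive convergence in all $L^p$ together with H\"older's inequality. Continuity of $\epsilon\mapsto X^{[k],\epsilon}$ follows from the same stability estimate applied to two values of $\epsilon$.
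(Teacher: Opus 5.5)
Your proposal is correct and follows the same route as the paper. The paper argues by induction on $k$, getting the $L^p$ bounds and the convergence of difference quotients from BDG and Gronwall, which works because all derivatives of $\alpha,\beta$ are bounded. The only differences are that the paper cites Kunita's Theorem 3.3.2 for $k=1$ where you carry out the difference-quotient argument directly, and that you make explicit the Fa\`a di Bruno/Leibniz bookkeeping the paper leaves implicit. Incidentally, your $k=1$ equation is the correct one: its forcing term $\beta(X_t^\epsilon)\,dV_t$ (the term $k g^{(k-1)}$ with $g^{(0)}=\beta(X^\epsilon)$) matches the paper's equation for $X^{[1]}$. The displayed formula omits this term, because its last sum is empty when $k=1$.
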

The proof of this proposition for the first derivative ($k=1$) follows from Theorem 3.3.2 in \citet{kunita2019stochastic}. Higher-order derivatives can be obtained by induction. Indeed, since the coefficients and their derivatives are bounded, the required $L^p$-estimates follow from the usual argument with the Burkholder–Davis–Gundy inequality and Gronwall’s lemma.

As a consequence of Proposition \ref{prop-expansion-of-SDE}, $X^{[k]}=X^{[k],0}$ ($k\geq 0$) satisfy the equations
\begin{align}
&dX_t^{[0]}=a(X_t^{[0]})dt,~~X_0^{[0]}=x_0,\\
&dX_t^{[1]}=\partial\alpha(X_t^{[0]})[X_t^{[1]}]dt+\beta(X_t^{[0]})dV_t,~~X_0^{[1]}=0.\label{eq-X-1}
\end{align}
These equations show that $X_t^{[0]}$ is deterministic, while $X_t^{[1]}$ is the solution of the linear equation (\ref{eq-X-1}). Hence, if we introduce $\tilde{X}_t=(X_t^{[1]},V_t)^\top$, then $\tilde{X}_t$ satisfies the linear equation
\begin{align*}
d\tilde{X}_t=\begin{pmatrix}
\partial \alpha(X_t^{[0]})&O\\
O&O
\end{pmatrix}\tilde{X}_tdt+\begin{pmatrix}
\beta(X_t^{[0]})\ I_{m_1}
\end{pmatrix}dV_t.
\end{align*}
Furthermore, by Taylor's theorem, we obtain the expansion
\begin{align}
\label{eq-h-theta}h(X_t^\epsilon)=h(X_t^{[0]})+\partial h(X_t^{[0]})X_t^{[1]}\epsilon+\sum_{i=2}^{n-1} \rho_t^{[i]}(h)\epsilon^i+\epsilon^{n}\int_0^1 \rho_t^{[n],\epsilon u}(h)u^{n-1}du,
\end{align}
where
\begin{align}
\label{eq-def-rho}\rho_t^{[k],\epsilon}(h)=\frac{1}{k!}\sum_{j=1}^k\sum_{(i_1,\cdots,i_j)\in \Lambda(k,j)}\nu(k,(i_1,\cdots,i_j))\partial^jh(X_t^{\epsilon})[X_t^{[i_1],\epsilon}\otimes \cdots \otimes X_t^{[i_j],\epsilon}]
\end{align}
and $\rho_t^{[k]}(\psi)=\rho_t^{[k],0}(\psi)$ for $k \in \mathbb{N}$.

Therefore, if we set
\begin{align*}
&a(t)=\begin{pmatrix}
\partial \alpha(X_t^{[0]})&O\\
O&O
\end{pmatrix},~~b(t)=\begin{pmatrix}
\beta(X_t^{[0]})\\ I_{m_1}
\end{pmatrix},\\
&c_0(t)=h(X_t^{[0]}),~~c(t)=\begin{pmatrix}
\partial h(X_t^{[0]})\epsilon&O
\end{pmatrix},\\
&H_t^{[1]}=U_t^{[1]}=0,~~H_t^{[k]}=\rho_t^{[k]}(h)~~(2\leq k\leq n-1),~~r_t^{\epsilon,n}=\rho_t^{[n],\epsilon}(h),
\end{align*}
then it follows from Proposition \ref{prop-expansion-of-SDE} and (\ref{eq-assumption-h}) that Assumptions \ref{assumption-A0}--\ref{assumption-A3} are satisfied.

Moreover, for $\phi \in C^\infty(\mathbb{R}^{d_1})$, we consider
\begin{align*}
U_t=\phi(X_t^\epsilon)=\sum_{i=0}^{n-1} \rho_t^{[i]}(\phi)\epsilon^i+\epsilon^{n}\int_0^1 \rho_t^{[n],\epsilon u}(\phi)u^{n-1}du,
\end{align*}
and define
\begin{align*}
U_t^{[i]}=\rho_t^{[i]}(\phi)~(0\leq i\leq n-1),~~u_t^{\epsilon,n}=\epsilon^{n}\int_0^1 \rho_t^{[n],\epsilon u}(\phi)u^{n-1}du.
\end{align*}
Then Assumption \ref{assumption-A6} is satisfied. Consequently, the problem of expanding the conditional expectation $E[X_t^\epsilon|\mathcal{Y}_t^\epsilon]$ is reduced to our framework.

\begin{remark}
 (1) At this stage, it may not be immediately clear why we introduce $\tilde{X}_t=(X_t^{[1]},V_t)^\top$ as the hidden system. However, the fact that $H_t^{[i]}$ can be represented as polynomial functionals of the system $\{\tilde{X}_s\}_{0\leq s\leq t}$ is crucial at the calculation stage. 
 
 For example, due to Proposition \ref{prop-expansion-of-SDE}, $X^{[2]}$ satisfies the equation
  \begin{align*}
  dX_t^{[2]}=&\partial\alpha(X_t^{[0]})[X_t^{[2]}]dt+\partial^2\alpha(X_t^{[0]})[(X_t^{[1]})^{\otimes 2}]dt+2\partial\beta(X_t^{[0]})[X_t^{[1]}]dV_t,
  \end{align*}
  and thus we have
  \begin{align*}
  X_t^{[2]}=\int_0^t \exp\left( \int_s^t \partial\alpha(X_r^{[0]})dr  \right)\left\{ \partial^2\alpha(X_s^{[0]})[(X_s^{[1]})^{\otimes 2}]ds+2\partial\beta(X_s^{[0]})[X_s^{[1]}]dV_s \right\}.
  \end{align*}
  Hence, $X_t^{[2]}$ is a polynomial functional of $\{\tilde{X}_s\}_{0\leq s\leq t}$, and the same property holds for
  \begin{align*}
  H_t^{[2]}=\frac{1}{2}\partial^2h(X_t^{[0]})[(X_t^{[1]})^{\otimes 2}]+\frac{1}{2}\partial h(X_t^{[0]})[X_t^{[2]}].
  \end{align*}
  Higher-order terms are obtained recursively in the same way.

  This structure is essential for reducing the computation of the coefficients to systems of ordinary differential equations. The details of the calculation are beyond the scope of this paper, while the methodology is discussed in the companion preprint \citet{kurisaki2025asymptotic}.\\
  (2) In the special case where $\beta$ is a constant matrix, $H^{[k]}~(k\geq 2)$ are given as polynomials of $X^{[1]}$ since $\partial \beta=0$. For this reason, it is sufficient to set $\tilde{X}_t=X_t^{[1]}$ without coupling with $V$ in this case.
\end{remark}

\section{Asymptotic Expansion of the Conditional Expectation}\label{section:asymptotic-expansion}
In this section, we derive an asymptotic expansion of the conditional expectation $E[U_t^\epsilon|\mathcal{Y}_t^\epsilon]$ under the framework introduced in Section \ref{section-framework}. For simplicity, we assume without loss of generality that $c_0=0$ by redefining $dY_t^\epsilon$ as $dY_t^\epsilon - c_0(t)dt$.

\subsection{Kallianpur-Striebel Formula and Its Expansion}\label{section:KS-formula}
We first recall the Kallianpur-Striebel formula, which provides a closed-form representation of the conditional expectation. To this end, we state a proposition concerning a Girsanov transformation.
\begin{proposition}\label{prop-girsanov}
  Let $\{\xi_t\}_{t\geq 0}$ be a $\mathbb{R}^{d_2}$-dimensional progressively measurable process independent of $W$, and assume
  \begin{align*}
    \int_0^tE\left[ |\xi_s|^2 \right]ds<\infty.
  \end{align*} 
  for every $t\geq 0$. Then, 
  \begin{align*}
    M_t=&\exp\left( \int_0^t \xi_s^\top (\sigma(s)\sigma(s)^\top)^{-1} \sigma(s) \, dW_s \right. \\
    &\quad \left. - \frac{1}{2} \int_0^t \xi_s^\top (\sigma(s)\sigma(s)^\top)^{-1} \xi_s \, ds \right).
  \end{align*}
  is a martingale.

  Furthermore, if we define a probability measure $Q$ on $\mathcal{F}_T$ by 
  \begin{align*}
    Q(A)=E[1_AM_T]~~(A\in \mathcal{F}_T)
  \end{align*}
  for a fixed $T$, then for any multi-dimensional random variable $U$ which is independent of $W$, the law of $(U,\{Y_t^\epsilon\}_{0\le t\le T})$ under $Q$ coincides with the law of $(U,\{\tilde Y_t^\epsilon\}_{0\le t\le T})$ under $P$, where
  \begin{align*}
    d\tilde{Y}_t^\epsilon=\{H_t^\epsilon+\xi_t\}dt+\sigma(t)dW_t,~~\tilde{Y}_0^\epsilon=0.
  \end{align*}
\end{proposition}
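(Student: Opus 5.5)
The plan is to exploit the independence of $\xi$ from $W$: conditionally on the $\sigma$-field generated by $\xi$ (and by $U$ and $H^\epsilon$ for the second part), the integrand of the stochastic exponential is deterministic. Girsanov's theorem then reduces to the Cameron--Martin theorem for a deterministic shift. Throughout I write $\theta_s:=\sigma(s)^\top(\sigma(s)\sigma(s)^\top)^{-1}\xi_s$, so that $M=\mathcal{E}(\int\theta^\top dW)$. By \eqref{eq-assumption-sigma-2}, $|\theta_s|^2=\xi_s^\top(\sigma\sigma^\top)^{-1}\xi_s\le C^{-1}|\xi_s|^2$. Hence $\int_0^t E|\theta_s|^2ds<\infty$, and in particular $\int_0^t|\theta_s|^2ds<\infty$ a.s.

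\textbf{Martingale property.} Since $\theta$ is progressively measurable and $W$ is an $\{\mathcal{F}_t\}$-Brownian motion, $M$ is a nonnegative local martingale, hence a supermartingale. It therefore suffices to show $E[M_t]=1$ for each $t$.

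I will realise the independence concretely. The joint law of $(\xi,W)$ is a product law, so one may transfer to a product space $\Omega'\times\Omega''$ carrying $\xi$ on the first factor and $W$ on the second. Only the law of $M_t$ matters, and $M_t$ is a measurable functional of $(\xi,W)$ once the stochastic integral is defined through an a.s. limit of Riemann-type sums. For fixed $\omega'$, the path $s\mapsto\theta_s(\omega')$ is deterministic and square-integrable on $[0,t]$. Thus $\int_0^t\theta_s^\top dW_s$ is Gaussian with variance $\int_0^t|\theta_s(\omega')|^2ds$, and the Gaussian moment generating function gives
\[
E''[M_t(\omega',\cdot)]=1.
\]
Fubini then yields $E[M_t]=1$. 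A technical point is identifying the $\omega'$-section of the stochastic integral with the Wiener integral of the frozen integrand. I would handle it by approximating $\theta$ by simple processes in $L^2(ds\otimes dP)$ and passing to an a.s. convergent subsequence.

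\textbf{Law under $Q$.} By Girsanov's theorem, $\tilde W_t:=W_t-\int_0^t\theta_s ds$ is a $Q$-Brownian motion on $[0,T]$. Moreover $\sigma(s)\theta_s=\xi_s$, so $\sigma\,dW=\sigma\,d\tilde W+\xi\,dt$ and
\[
dY^\epsilon_t=\{H^\epsilon_t+\xi_t\}dt+\sigma(t)d\tilde W_t .
\]
Both $Y^\epsilon$ under $Q$ and $\tilde Y^\epsilon$ under $P$ are the same measurable functional of the triple of processes $(H^\epsilon,\xi,\cdot)$ and the driving noise. It thus suffices to prove that the $Q$-law of $(U,H^\epsilon,\xi,\tilde W)$ equals the $P$-law of $(U,H^\epsilon,\xi,W)$.

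Here I use that $H^\epsilon=cX+\sum H^{[i]}\epsilon^i+r^{\epsilon,n}$ is built from processes independent of $W$ by \ref{assumption-A2}, \ref{assumption-A3} and the independence of $X$ from $W$. I take the triple $(U,H^\epsilon,\xi)$ to be jointly independent of $W$; this is the interpretation needed, and I will state it explicitly.

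Let $\mathcal{G}=\sigma(U,H^\epsilon,\xi)$. Fix bounded measurable $g$ and $F$, and repeat the product-space freezing argument with $\omega'$ now carrying $(U,H^\epsilon,\xi)$. For fixed $\omega'$, the Cameron--Martin theorem for the deterministic shift $\int_0^\cdot\theta_s(\omega')ds$ gives $E''[F(\tilde W)M_T]=E''[F(W)]$. Integrating over $\omega'$ yields
\[
E_Q[g(U,H^\epsilon,\xi)F(\tilde W)]=E[g(U,H^\epsilon,\xi)]\,E[F(W)],
\]
which is precisely the claimed equality of laws.

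\textbf{Main obstacle.} I expect the main difficulty to be the rigorous conditioning step: justifying that the stochastic integral, and hence $M_T$ and $\tilde W$, can be evaluated $\omega'$-wise as Wiener integrals with a frozen integrand. Once this measurable-selection/approximation issue is settled, both parts follow from explicit Gaussian computations.
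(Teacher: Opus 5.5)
Your proof is correct, but it takes a different, self-contained route. The paper proves nothing here: it cites Kunita's Theorem 2.8 for the martingale property and invokes ``the well-known Girsanov transformation'' for the law identity. You instead pass to a product space, freeze the $W$-independent data, and reduce both parts to explicit Gaussian facts: the moment generating function of a Wiener integral, and Cameron--Martin for a deterministic shift. You then integrate out with Fubini.

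The citation is shorter, but your route has two advantages. First, it shows that beyond independence the martingale property only needs a.s.\ finiteness of $\int_0^t|\xi_s|^2ds$; the moment bound enters only in your approximation step. Second, and more importantly, it exposes what the law statement really requires. Girsanov by itself only says that $\tilde W$ is a $Q$-Brownian motion. The claim also needs that $Q$ leaves the law of $(U,H^\epsilon,\xi)$ unchanged and that $\tilde W$ is $Q$-independent of it. That is exactly what your identity $E_Q[g(U,H^\epsilon,\xi)F(\tilde W)]=E[g(U,H^\epsilon,\xi)]E[F(W)]$ delivers; equivalently, one can run Girsanov in the initially enlarged filtration $\sigma(U,H^\epsilon,\xi)\vee\mathcal{F}^W_t$.

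Your explicit assumption that $(U,H^\epsilon,\xi)$ is \emph{jointly} independent of $W$ is in fact necessary, not just a convenient reading. Take $d_2=m_2=1$, $\sigma\equiv1$, $H^\epsilon\equiv0$, $\xi\equiv B$ for a symmetric $\pm1$ variable $B\in\mathcal{F}_0$ independent of $W$, and $U=B\,\mathrm{sign}(W_T)$. Then $U$ and $\xi$ are each independent of $W$. Yet $E_Q[U1_{\{Y_T^\epsilon>0\}}]=P(0<W_T<T)>0$, while $E[U1_{\{\tilde Y_T^\epsilon>0\}}]=0$. Joint independence does hold in the motivating example of Section~\ref{section-example}, where all inputs are functionals of $V$. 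However, the proposition's hypotheses, and \ref{assumption-A2}--\ref{assumption-A3}, only assert separate independence.
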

\begin{proof}
  For the proof of the first part, see Theorem 2.8 in \citep{kunita2011nonlinear}. The second part is the well-known Girsanov transformation.
\end{proof}

We now fix $T>0$, and define a probability measure $Q^\epsilon$ by
\begin{align*}
  Q^\epsilon(A) = E\left[ 1_A (Z_T^\epsilon)^{-1} \right]
\end{align*}
for \( A \in \mathcal{F} \), where
\begin{align}
  \begin{split}
    Z_t^\epsilon &= \exp\left( \int_0^t (H_s^\epsilon)^\top (\sigma(s)\sigma(s)^\top)^{-1} \sigma(s) \, dW_s \right. \\
    &\quad \left. + \frac{1}{2} \int_0^t (H_s^\epsilon)^\top (\sigma(s)\sigma(s)^\top)^{-1} H_s^\epsilon \, ds \right) \\    
    &= \exp\left( \int_0^t (H_s^\epsilon)^\top (\sigma(s)\sigma(s)^\top)^{-1} \, dY_s^\epsilon \right. \\
    &\quad \left. - \frac{1}{2} \int_0^t (H_s^\epsilon)^\top (\sigma(s)\sigma(s)^\top)^{-1} H_s^\epsilon \, ds \right).
  \end{split}\label{eq-exp-Z}   
\end{align}
By Proposition \ref{prop-girsanov} with $\xi_t=-H_t^\epsilon$, the probability measure $Q^\epsilon$ is well defined, and the following result holds.

\begin{proposition}\label{prop-girsanov-2}
  Under the new measure \( Q^\epsilon \), the process
  \[
  \int_0^t (\sigma(s)\sigma(s)^\top)^{-\frac{1}{2}} dY_s^\epsilon
  \]
  is a Brownian motion. Furthermore, if a random variable $U$ is independent of $W$ under $P$, then it is also independent of $\{Y_t^\epsilon\}_{0\le t\le T}$ under $Q^\epsilon$, and its law is the same under both $P$ and $Q^\epsilon$.
\end{proposition}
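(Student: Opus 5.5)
We will apply Proposition \ref{prop-girsanov} with $\xi_t=-H_t^\epsilon$.

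\textbf{Step 1: the density is $(Z_T^\epsilon)^{-1}$.} By \eqref{eq-exp-Z},
\begin{align*}
  (Z_T^\epsilon)^{-1}=\exp\left(-\int_0^T (H_s^\epsilon)^\top(\sigma\sigma^\top)^{-1}\sigma\,dW_s-\frac12\int_0^T (H_s^\epsilon)^\top(\sigma\sigma^\top)^{-1}H_s^\epsilon\,ds\right),
\end{align*}
which is exactly $M_T$ for $\xi=-H^\epsilon$. To apply Proposition \ref{prop-girsanov} we must check its hypotheses for $\xi=-H^\epsilon$.

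\textbf{Step 2: verify the hypotheses.} We need $\int_0^t E[|H_s^\epsilon|^2]ds<\infty$ and that $H^\epsilon$ is independent of $W$. Write
\begin{align*}
  H^\epsilon=cX+\sum_{i=1}^{n-1}H^{[i]}\epsilon^i+r^{n,\epsilon}.
\end{align*}
For the term $cX$: $X$ is Gaussian and a function of $X_0$ and $V$, hence independent of $W$. Moreover $\sup_{s\le t}E|X_s|^2<\infty$ by \eqref{eq-assumption-a}--\eqref{eq-assumption-b} and Gronwall, and together with \eqref{eq-assumption-c} this gives $\int_0^t E|c(s)X_s|^2\,ds<\infty$. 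For the $H^{[i]}$ terms, square integrability and independence of $W$ are assumed in \ref{assumption-A2}. For $r^{n,\epsilon}$, \ref{assumption-A3} with $p\ge2$ and Jensen give $\int_0^t E|r_s^{n,\epsilon}|^2ds\le tE[\sup_{s\le t}|r_s^{n,\epsilon}|^p]^{2/p}<\infty$, and independence from $W$ is also assumed there. The case $\epsilon=0$ is trivial.

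One subtlety is that each summand being independent of $W$ does not by itself make the sum independent of $W$. I would interpret the assumptions, as the paper implicitly does, as the whole family being jointly independent of $W$, i.e. all of them measurable with respect to a $\sigma$-field independent of $W$. This is the main point needing care; in the example of Section \ref{section-example} everything is a functional of $V$, so it holds.

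\textbf{Step 3: Girsanov.} Proposition \ref{prop-girsanov} now shows that $M$ is a martingale, so $Q^\epsilon$ is a probability measure. It also shows that for $U$ independent of $W$, the law of $(U,Y^\epsilon)$ under $Q^\epsilon$ equals the $P$-law of $(U,\tilde Y^\epsilon)$, where $d\tilde Y^\epsilon=\sigma\,dW$ since $H^\epsilon+\xi=0$. Under $P$, $\tilde Y$ is a function of $W$ alone, so $U$ is independent of $\tilde Y$ and $U$ keeps its $P$-law. Transferring to $Q^\epsilon$, $U$ is independent of $\{Y_t^\epsilon\}_{t\le T}$ and its $Q^\epsilon$-law equals its $P$-law.

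\textbf{Step 4: the Brownian motion.} Take $U$ constant. Then $Y^\epsilon$ under $Q^\epsilon$ has the $P$-law of $\int_0^\cdot\sigma\,dW$, so $\int_0^t(\sigma\sigma^\top)^{-1/2}dY^\epsilon_s$ has the $P$-law of $\int_0^t(\sigma\sigma^\top)^{-1/2}\sigma\,dW_s$. This is a continuous martingale with quadratic variation $\int_0^t(\sigma\sigma^\top)^{-1/2}\sigma\sigma^\top(\sigma\sigma^\top)^{-1/2}ds=tI$, which is well defined by \eqref{eq-assumption-sigma-2}. By Lévy's characterization it is a Brownian motion.

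Here the law identity gives Brownian motion in the filtration generated by $Y^\epsilon$. If one wants it to be an $\{\mathcal{F}_t\}$-Brownian motion, use the Girsanov theorem directly: under $Q^\epsilon$, $\tilde W_t=W_t+\int_0^t\sigma^\top(\sigma\sigma^\top)^{-1}H^\epsilon_s\,ds$ is an $\mathcal{F}_t$-Brownian motion. Then $dY^\epsilon=\sigma\,d\tilde W$, because $\sigma\sigma^\top(\sigma\sigma^\top)^{-1}H^\epsilon=H^\epsilon$, and the quadratic variation computation above gives the conclusion.
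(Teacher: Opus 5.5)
Your proposal is correct and follows the paper's own route: the paper obtains this proposition in one line from Proposition~\ref{prop-girsanov} with $\xi_t=-H_t^\epsilon$, and your Steps 1--4 spell out what it leaves implicit, namely the identification $(Z_T^\epsilon)^{-1}=M_T$, the hypothesis check, and the L\'evy-characterization step. Your joint-independence caveat is well taken and applies to $U$ too: the claim that $U$ keeps its law and becomes independent of $Y^\epsilon$ really needs $(U,H^\epsilon)$ to be jointly independent of $W$, since $U$ and $H^\epsilon$ each being separately independent of $W$ is not enough, and this joint independence holds in the example of Section~\ref{section-example}.
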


Moreover, the following formula holds.

\begin{proposition}[Kallianpur-Striebel formula]\label{prop-Kallianpur-striebel}
  If a random variable $U$ is independent of $W$ under $P$ and \( E[|U|] < \infty \), then it holds that
\begin{align}
  \label{eq-Kallianpur-Striebel-epsilon}
  E[U|\mathcal{Y}_t^\epsilon] =
  \frac{\displaystyle E_{Q^\epsilon}\left[ U Z_t^\epsilon \middle| \mathcal{Y}_t^\epsilon \right]}
       {\displaystyle E_{Q^\epsilon}\left[ Z_t^\epsilon \middle| \mathcal{Y}_t^\epsilon \right]}
  \quad (0 \leq t \leq T).
\end{align}
\end{proposition}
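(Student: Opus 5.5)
The plan is the standard abstract Bayes formula argument for the change of measure between $P$ and $Q^\epsilon$. By construction $dQ^\epsilon/dP = (Z_T^\epsilon)^{-1}$ on $\mathcal{F}_T$. Proposition \ref{prop-girsanov} applied with $\xi=-H^\epsilon$ shows that $(Z_t^\epsilon)^{-1}$ is a $P$-martingale with expectation one, so $Q^\epsilon$ is a probability measure equivalent to $P$ on $\mathcal{F}_T$. The inverse density is $dP/dQ^\epsilon = Z_T^\epsilon$.

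\textbf{Step 1: $Z^\epsilon$ is a $Q^\epsilon$-martingale.} For $0\le t\le T$ and $A\in\mathcal{F}_t$,
\begin{align*}
E_{Q^\epsilon}[1_A Z_T^\epsilon] = P(A) = E[1_A (Z_t^\epsilon)^{-1} Z_t^\epsilon] = E_{Q^\epsilon}[1_A Z_t^\epsilon],
\end{align*}
where the middle equality uses the $P$-martingale property of $(Z^\epsilon)^{-1}$ to pass from $(Z_T^\epsilon)^{-1}$ to $(Z_t^\epsilon)^{-1}$. Hence $E_{Q^\epsilon}[Z_T^\epsilon\mid\mathcal{F}_t]=Z_t^\epsilon$. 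Since $\mathcal{Y}_t^\epsilon$ consists of $\mathcal{F}_t$-measurable sets (after augmenting by null sets, which are common to $P$ and $Q^\epsilon$), this also gives $E_{Q^\epsilon}[Z_T^\epsilon\mid\mathcal{Y}_t^\epsilon]=E_{Q^\epsilon}[Z_t^\epsilon\mid\mathcal{Y}_t^\epsilon]$.

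\textbf{Step 2: abstract Bayes formula.} We need to interpret $U$ as measurable with respect to $\mathcal{F}_t$, or at least handle the $\mathcal{F}_T$-measurable case. Take $B\in\mathcal{Y}_t^\epsilon$. Then
\begin{align*}
E[1_B U] = E_{Q^\epsilon}[1_B U Z_T^\epsilon] = E_{Q^\epsilon}\big[1_B\, E_{Q^\epsilon}[U Z_T^\epsilon\mid\mathcal{Y}_t^\epsilon]\big],
\end{align*}
and similarly
\begin{align*}
E\big[1_B\, E[U\mid\mathcal{Y}_t^\epsilon]\big] = E_{Q^\epsilon}\big[1_B\, E[U\mid\mathcal{Y}_t^\epsilon]\, E_{Q^\epsilon}[Z_T^\epsilon\mid\mathcal{Y}_t^\epsilon]\big].
\end{align*}
Comparing the two identities gives
\begin{align*}
E[U\mid\mathcal{Y}_t^\epsilon]\, E_{Q^\epsilon}[Z_T^\epsilon\mid\mathcal{Y}_t^\epsilon] = E_{Q^\epsilon}[U Z_T^\epsilon\mid\mathcal{Y}_t^\epsilon].
\end{align*}
The denominator is strictly positive a.s., because $Z_T^\epsilon>0$. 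Integrability of $UZ_T^\epsilon$ under $Q^\epsilon$ is immediate, since $E_{Q^\epsilon}[|U|Z_T^\epsilon]=E[|U|]<\infty$.

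\textbf{Step 3: replace $T$ by $t$.} For the denominator this is Step 1. For the numerator we need $E_{Q^\epsilon}[UZ_T^\epsilon\mid\mathcal{Y}_t^\epsilon]=E_{Q^\epsilon}[UZ_t^\epsilon\mid\mathcal{Y}_t^\epsilon]$.
\begin{itemize}
\item If $U$ is $\mathcal{F}_t$-measurable, this follows by conditioning on $\mathcal{F}_t$ first and using Step 1.
\item In the intended application $U=U_t^\epsilon$, which in the example is $\mathcal{F}_t$-measurable. For a general $W$-independent $U$, one uses that $Z_T^\epsilon/Z_t^\epsilon$ involves only increments of $W$ after $t$ together with $H^\epsilon$ on $[t,T]$. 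Under $P$ these increments are independent of $(U,\mathcal{F}_t^W)$, and the conditional $P$-expectation of $(Z_T^\epsilon/Z_t^\epsilon)^{-1}$ given $\mathcal{F}_t\vee\sigma(U,H^\epsilon)$ equals $1$.
\end{itemize}
The more careful route for the general case is to take $U$ as given, enlarge the filtration to $\mathcal{F}_t\vee\sigma(U)$, and note that $W$ is still a Brownian motion in this enlarged filtration because $U$ is independent of $W$. Step 1 then goes through verbatim.

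\textbf{Main obstacle.} This measurability subtlety in Step 3, namely justifying that $Z_T^\epsilon$ can be replaced by $Z_t^\epsilon$ in the numerator when $U$ is not $\mathcal{F}_t$-measurable, is the only delicate point; Steps 1 and 2 are routine. Proposition \ref{prop-girsanov-2} is not strictly needed for the identity itself. It is used later, to compute the right-hand side by treating $U$ and $Y^\epsilon$ as independent under $Q^\epsilon$.
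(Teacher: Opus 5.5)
Your Steps 1 and 2 and the first bullet of Step 3 are correct. Using $dP/dQ^\epsilon=Z_T^\epsilon$, the $Q^\epsilon$-martingale property of $Z^\epsilon$, the abstract Bayes formula and the tower property, you obtain the identity for every integrable $\mathcal{F}_t$-measurable $U$. This is the standard argument that the paper defers to Bain--Crisan, who prove it for $U=\varphi(X_t)$. It does not even use independence of $U$ from $W$. It also covers every use of the proposition in the paper, where $U=U_t^\epsilon$ (e.g.\ $\phi(X_t^\epsilon)$).

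The gap is your extension to a $W$-independent $U$ that is not $\mathcal{F}_t$-measurable. Independence of $U$ from $W$ does not make $W$ a Brownian motion in $\mathcal{F}_s\vee\sigma(U)$. Nor does it make the increments of $W$ after $t$ independent of $\mathcal{F}_t\vee\sigma(U,H^\epsilon)$, because it says nothing about $U$ jointly with $\mathcal{F}_t$ or with $H^\epsilon$.

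This step cannot be repaired, because the identity with $Z_t^\epsilon$ is false in that generality. Take $d_1=d_2=1$, $a=0$, $b=c=\sigma=1$, $X_0=0$ (any Gaussian $X_0$ works with $X_1$ in place of $V_1$), $H^\epsilon=X=V$, $t=1$, $T=2$, and
\[
U=\operatorname{sgn}(V_1)\,\operatorname{sgn}(W_2-W_1).
\]
\begin{itemize}
\item Conditionally on $W$, $U$ is a fair sign, so $U$ is independent of $W$. Yet $\operatorname{sgn}(W_2-W_1)=U\operatorname{sgn}(V_1)$ is $\mathcal{F}_1\vee\sigma(U)$-measurable, which refutes your enlargement claim.
\item Since $W_2-W_1$ is independent of $\mathcal{F}_1$, we get $E[U\mid\mathcal{Y}_1^\epsilon]=0$.
\item Under $Q^\epsilon$, $(Y^\epsilon,V)$ is a planar Brownian motion. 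Conditionally on $\mathcal{G}=\sigma(V_s,Y_s^\epsilon;s\le 1)$, the variable $W_2-W_1=Y_2^\epsilon-Y_1^\epsilon-\int_1^2V_s\,ds$ is therefore $N(-V_1,4/3)$. Since $Z_1^\epsilon$ is $\mathcal{G}$-measurable,
\[
E_{Q^\epsilon}[UZ_1^\epsilon\mid\mathcal{Y}_1^\epsilon]=E_{Q^\epsilon}\bigl[\operatorname{sgn}(V_1)\,g(V_1)\,Z_1^\epsilon\bigm|\mathcal{Y}_1^\epsilon\bigr]<0\ \text{a.s.},\qquad g(v)=1-2\Phi\bigl(\tfrac{\sqrt{3}}{2}v\bigr),
\]
where $\Phi$ is the standard normal distribution function.
\end{itemize}
So the right-hand side of the formula is a.s.\ strictly negative while the left-hand side vanishes. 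Your Step 2 formula with $Z_T^\epsilon$ remains valid for any integrable $U$; only the passage from $T$ to $t$ fails.

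You should therefore state and prove the result for $\mathcal{F}_t$-measurable $U$. Alternatively, if you assume $(U,H^\epsilon)$ is jointly independent of $W$, your idea works once $\mathcal{F}_t$ is replaced by $\mathcal{F}_t^W$. Condition on $\sigma(U)\vee\sigma(H_s^\epsilon;s\le T)\vee\mathcal{F}_t^W$. This $\sigma$-algebra contains $\mathcal{Y}_t^\epsilon$ and $Z_t^\epsilon$, and it is independent of the increments of $W$ after $t$. Conditioning on it gives $E[Z_t^\epsilon/Z_T^\epsilon\mid\cdot\,]=1$, and hence $E_{Q^\epsilon}[UZ_t^\epsilon 1_B]=E[U1_B]$ for every $B\in\mathcal{Y}_t^\epsilon$.
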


\begin{proof}
  See \citet{bain2009fundamentals}.
\end{proof}

The goal of this section is to derive an asymptotic expansion of (\ref{eq-Kallianpur-Striebel-epsilon}) by expanding both the numerator and the denominator with respect to $\epsilon$. We first expand $Z_t^\epsilon$ as a power series in $\epsilon$ and obtain the following formula.
\begin{proposition}\label{prop-expansion-Z}
  For any $0\leq t\leq T$, it holds
  \begin{align}
    U_t^\epsilon Z_t^\epsilon
    =&Z_t^0 U_t^\epsilon\left( 1+\sum_{i=1}^{n-1}I_t^{i,\epsilon}+R_t^\epsilon \right),\label{eq-expansion-integrand}
  \end{align}
  where
  \begin{align}
    r_s^{1,\epsilon}=&H_s^\epsilon - c(s)X_s\nonumber\\
    \begin{split}
      I_t^{i,\epsilon}=&\int_0^t\int_0^{t_{i}}\cdots \int_{0}^{t_2} (r_{t_1}^{1,\epsilon})^\top (\sigma(t_1)\sigma(t_1)^\top)^{-1} (dY_{t_1}^\epsilon-c(t_1)X_{t_1}dt_1)\\
  &\times \cdots \times (r_{t_i}^{1,\epsilon})^\top (\sigma(t_i)\sigma(t_i)^\top)^{-1} (dY_{t_i}^\epsilon-c(t_i)X_{t_i}dt_i),
    \end{split}\label{def-I-hat}\\
    \begin{split}
      R_t^\epsilon=&\int_0^t\int_0^{t_{n}}\cdots \int_{0}^{t_2} K_{t_1}^\epsilon(r_{t_1}^{1,\epsilon})^\top (\sigma(t_1)\sigma(t_1)^\top)^{-1} (dY_{t_1}^\epsilon-c(t_1)X_{t_1}dt_1)\\
  &\times \cdots \times (r_{t_n}^{1,\epsilon})^\top (\sigma(t_n)\sigma(t_n)^\top)^{-1} (dY_{t_n}^\epsilon-c(t_n)X_{t_n}dt_n),
    \end{split} \label{def-R-hat}\\
    \begin{split}
      K_t^\epsilon=&\exp\left( \int_0^t(r_s^{1,\epsilon})^\top (\sigma(s)\sigma(s)^\top)^{-1} (dY_s^\epsilon-c(s)X_sds)\right.\\
  &\left.-\frac{1}{2}\int_0^t(r_s^{1,\epsilon})^\top(\sigma(s)\sigma(s)^\top)^{-1} r_s^{1,\epsilon} ds\right),
    \end{split}  \label{def-K-hat}
  \end{align}
  and
  \begin{align}
    \begin{split}
      Z_t^0=&\exp\left( \int_0^t(c(s)X_s)^\top (\sigma(s)\sigma(s)^\top)^{-1} dY_s^\epsilon\right.\\
  &\left.-\frac{1}{2}\int_0^t(c(s)X_s)^\top(\sigma(s)\sigma(s)^\top)^{-1} c(s)X_s ds\right).
    \end{split}\label{def-Z0}    
  \end{align}  
\end{proposition}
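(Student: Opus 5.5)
Since $U_t^\epsilon$ appears as a common factor on both sides of \eqref{eq-expansion-integrand}, it suffices to prove
\[
Z_t^\epsilon = Z_t^0\Bigl(1+\sum_{i=1}^{n-1}I_t^{i,\epsilon}+R_t^\epsilon\Bigr).
\]
The plan is to factorize $Z^\epsilon$ into $Z^0$ times the exponential $K^\epsilon$, and then expand $K^\epsilon$ by iterating its linear SDE, in the manner of a Picard/Dyson series. Throughout we write $\Sigma(s)=\sigma(s)\sigma(s)^\top$ and use the second (observation-driven) form of \eqref{eq-exp-Z}.

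\textbf{Step 1 (factorization).} Substitute $H_s^\epsilon = c(s)X_s + r_s^{1,\epsilon}$ into the exponent of $Z_t^\epsilon$. The stochastic integral splits linearly as
\[
\int_0^t (c(s)X_s)^\top\Sigma^{-1}dY_s^\epsilon+\int_0^t (r_s^{1,\epsilon})^\top\Sigma^{-1}dY_s^\epsilon .
\]
The quadratic term expands as
\[
-\tfrac12 (cX)^\top\Sigma^{-1}cX-(r^{1,\epsilon})^\top\Sigma^{-1}cX-\tfrac12 (r^{1,\epsilon})^\top\Sigma^{-1}r^{1,\epsilon},
\]
where we use the symmetry of $\Sigma^{-1}$. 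The cross term $-(r^{1,\epsilon})^\top\Sigma^{-1}cX\,ds$ is then absorbed into the $r$-integral, turning $dY_s^\epsilon$ into $dY_s^\epsilon-c(s)X_s\,ds$. Comparing with \eqref{def-K-hat} and \eqref{def-Z0}, this gives $Z_t^\epsilon=Z_t^0K_t^\epsilon$. All the integrals are well defined: $\int_0^t|H_s^\epsilon|^2ds<\infty$ a.s. by \ref{assumption-A0}, \ref{assumption-A2}, \ref{assumption-A3} and the Gaussianity of $X$, and $\Sigma^{-1}$ is bounded by \eqref{eq-assumption-sigma-2}.

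\textbf{Step 2 (linear SDE for $K^\epsilon$).} Set $d\Xi_s=(r_s^{1,\epsilon})^\top\Sigma(s)^{-1}(dY_s^\epsilon-c(s)X_sds)$. Since $dY_s^\epsilon-c(s)X_sds=r_s^{1,\epsilon}ds+\sigma(s)dW_s$, we have
\[
d\langle\Xi\rangle_s=(r_s^{1,\epsilon})^\top\Sigma^{-1}\sigma\sigma^\top\Sigma^{-1}r_s^{1,\epsilon}ds=(r_s^{1,\epsilon})^\top\Sigma^{-1}r_s^{1,\epsilon}ds.
\]
Thus $K^\epsilon=\exp(\Xi-\tfrac12\langle\Xi\rangle)$ is the Doléans-Dade exponential of the continuous semimartingale $\Xi$. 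By Itô's formula, $K_t^\epsilon=1+\int_0^tK_{s}^\epsilon\,d\Xi_s$.

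\textbf{Step 3 (iteration).} Substituting the integral equation into itself $n$ times gives
\[
K_t^\epsilon=1+\sum_{i=1}^{n-1}\int_0^t\int_0^{t_i}\cdots\int_0^{t_2}d\Xi_{t_1}\cdots d\Xi_{t_i}+\int_0^t\cdots\int_0^{t_2}K_{t_1}^\epsilon\,d\Xi_{t_1}\cdots d\Xi_{t_n}.
\]
This is a straightforward induction on $n$, using $K_{t_1}^\epsilon=1+\int_0^{t_1}K^\epsilon d\Xi$ at the innermost level. The $i$-fold terms are exactly $I_t^{i,\epsilon}$ and the last term is $R_t^\epsilon$. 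Multiplying by $Z_t^0U_t^\epsilon$ then yields \eqref{eq-expansion-integrand}.

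\textbf{Main obstacle.} The only delicate point is that the iterated integrals against the semimartingale $\Xi$ must be well defined. At each level the integrand is a continuous adapted process, either an iterated integral or $K^\epsilon$ times one, so it is locally bounded. The integrals therefore exist pathwise as stochastic integrals without any moment conditions, and the identity is an almost-sure algebraic one. Integrability estimates are not needed here; they belong to the remainder analysis in Section \ref{section:estimation-residue}.
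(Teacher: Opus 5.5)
Your proof is correct and follows the same route as the paper. You factor $Z_t^\epsilon=Z_t^0K_t^\epsilon$, recognize $K^\epsilon$ as the stochastic exponential satisfying $K_t^\epsilon=1+\int_0^tK_s^\epsilon\,d\Xi_s$, and iterate this identity $n$ times to produce the terms $I_t^{i,\epsilon}$ and the remainder $R_t^\epsilon$, while making explicit what the paper leaves implicit: the algebra of the factorization, the identity $d\langle\Xi\rangle_s=(r_s^{1,\epsilon})^\top(\sigma(s)\sigma(s)^\top)^{-1}r_s^{1,\epsilon}\,ds$, and the well-definedness of the iterated integrals (which exist as stochastic integrals of continuous adapted integrands, not literally ``pathwise'').
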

\begin{proof}
  Note that we can write $Z_t^\epsilon=Z_t^0K_t^\epsilon$. Then by repeatedly employing It\^o's formula, we have
\begin{align}
  K_t^\epsilon&=1+\int_0^t K_{s}^\epsilon (r_{s}^{1,\epsilon})^\top (\sigma(s)\sigma(s)^\top)^{-1} (dY_{s}^\epsilon-c(s)X_sds)\nonumber\\
  &=\cdots\cdots\nonumber\\
  \begin{split}
    &=1+\sum_{i=1}^{n-1}\int_0^t\int_0^{t_{i}}\cdots \int_{0}^{t_2} (r_{t_1}^{1,\epsilon})^\top (\sigma(t_1)\sigma(t_1)^\top)^{-1} (dY_{t_1}^\epsilon-c(t_1)X_{t_1}dt_1)\\
  &\times \cdots \times (r_{t_i}^{1,\epsilon})^\top (\sigma(t_i)\sigma(t_i)^\top)^{-1} (dY_{t_i}^\epsilon-c(t_i)X_{t_i}dt_i)\\
  &+\int_0^t\int_0^{t_{n}}\cdots \int_{0}^{t_2} K_{t_1}^\epsilon(r_{t_1}^{1,\epsilon})^\top (\sigma(t_1)\sigma(t_1)^\top)^{-1} (dY_{t_1}^\epsilon-c(t_1)X_{t_1}dt_1)\\
  &\times \cdots \times (r_{t_n}^{1,\epsilon})^\top (\sigma(t_n)\sigma(t_n)^\top)^{-1} (dY_{t_n}^\epsilon-c(t_n)X_{t_n}dt_n),
  \end{split}\label{eq-expansion-K}
\end{align}
which yields the conclusion.
\end{proof}

Using this representation, we consider the expansion of $E_{Q^\epsilon}[U_t^\epsilon Z_t^\epsilon|\mathcal{Y}_t^\epsilon]$. To this end, we introduce a new probability measure $\hat{P}^\epsilon$ on $\mathcal{F}_T$ by
\begin{align*}
  \hat{P}^\epsilon(A)=E_{Q^\epsilon}[1_A Z_T^0] \quad (A\in \mathcal{F}_T).
\end{align*}
Recalling Proposition \ref{prop-girsanov-2} and applying Proposition \ref{prop-girsanov} again with $\xi_t=c(t)X_t$, we obtain the following result.

\begin{proposition}\label{prop-girsanov-3}
  For any multidimensional random variable $U$ that is independent of $W$ under $P$, the law of $(U, \{Y_t^\epsilon\}_{0\leq t\leq T})$ under $\hat{P}^\epsilon$ is equivalent to the law of $(U, \{\hat{Y}_t^\epsilon\}_{0\leq t\leq T})$ under $P$, where
\begin{align*}
  d\hat{Y}_t^\epsilon = c(t)X_t\,dt + \sigma(t)dW_t, \qquad \hat{Y}_0^\epsilon = 0.
\end{align*}
\end{proposition}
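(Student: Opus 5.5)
We will obtain the statement by composing the two changes of measure already available: first pass from $P$ to $Q^\epsilon$ (Proposition \ref{prop-girsanov-2}), then from $Q^\epsilon$ to $\hat{P}^\epsilon$ by the density $Z_T^0$. Note that $\hat P^\epsilon(A)=E_{Q^\epsilon}[1_A Z_T^0]=E[1_A Z_T^0 (Z_T^\epsilon)^{-1}]=E[1_A (K_T^\epsilon)^{-1}]$, using $Z_T^\epsilon=Z_T^0K_T^\epsilon$ from the proof of Proposition \ref{prop-expansion-Z}.

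The first step is to check that $\hat P^\epsilon$ is a probability measure, i.e. $E[(K_T^\epsilon)^{-1}]=1$. Writing $dY^\epsilon_s-c(s)X_sds=(H^\epsilon_s-c(s)X_s)ds+\sigma(s)dW_s=r^{1,\epsilon}_sds+\sigma(s)dW_s$, a direct computation gives
\begin{align*}
(K_t^\epsilon)^{-1}=\exp\Big(-\int_0^t (r_s^{1,\epsilon})^\top(\sigma\sigma^\top)^{-1}(s)\sigma(s)dW_s-\frac12\int_0^t (r_s^{1,\epsilon})^\top(\sigma\sigma^\top)^{-1}(s) r_s^{1,\epsilon}ds\Big),
\end{align*}
which is exactly the exponential $M_t$ of Proposition \ref{prop-girsanov} with $\xi=-r^{1,\epsilon}=c X-H^\epsilon$. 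This $\xi$ is progressively measurable and independent of $W$ (since $X$, the $H^{[i]}$ and $r^{n,\epsilon}$ are, by \ref{assumption-A2}, \ref{assumption-A3} and the Gaussian setup), and square-integrable on $[0,T]$ by \eqref{eq-assumption-c}, boundedness of $\sup_{s\le T}E|X_s|^2$ for the linear SDE \eqref{eq-X-linear}, and \ref{assumption-A2}--\ref{assumption-A3}. Hence $(K^\epsilon)^{-1}$ is a true martingale, $\hat P^\epsilon$ is well defined, and is equivalent to $P$.

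The second step is to identify the law. Applying the second part of Proposition \ref{prop-girsanov} with this $\xi$, the law of $(U,\{Y^\epsilon_t\}_{t\le T})$ under $\hat P^\epsilon$ equals the law under $P$ of $(U,\tilde Y)$ with $d\tilde Y_t=(H^\epsilon_t+\xi_t)dt+\sigma(t)dW_t=c(t)X_tdt+\sigma(t)dW_t$, which is precisely $\hat Y^\epsilon$. The joint law involves $X$ inside $\tilde Y$; to be safe we apply the proposition with $U$ replaced by the pair $(U,X)$ (all of which are independent of $W$), so the joint law including the drift is transported correctly.

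The main obstacle is the formal reading of ``$\xi_t=c(t)X_t$'' in the text: applied literally under $Q^\epsilon$ (where $Y^\epsilon$ is driftless by Proposition \ref{prop-girsanov-2}), one needs a version of Proposition \ref{prop-girsanov} under $Q^\epsilon$ with $\sigma^{-1}$-rescaled $Y^\epsilon$ as the driving Brownian motion and $X$ still independent of it. The route via $P$ above avoids this, requiring only the martingale property of $(K^\epsilon)^{-1}$ under $P$. The delicate point in that route is verifying independence of $\xi$ from $W$ and its $L^2$ integrability, which follows from the assumptions as indicated.
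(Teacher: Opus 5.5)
Your argument is correct, but it is organized differently from the paper's.

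The paper composes two changes of measure. By Proposition~\ref{prop-girsanov-2}, under $Q^\epsilon$ the process $\int_0^t(\sigma(s)\sigma(s)^\top)^{-1/2}dY_s^\epsilon$ is a Brownian motion independent of the signal. Proposition~\ref{prop-girsanov} is then applied a second time, under $Q^\epsilon$, with zero drift and $\xi_t=c(t)X_t$, so that $Z_T^0$ plays the role of $M_T$.

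You instead compute the composite density $d\hat P^\epsilon/dP=Z_T^0(Z_T^\epsilon)^{-1}=(K_T^\epsilon)^{-1}$. You recognize it as the exponential of Proposition~\ref{prop-girsanov} with $\xi=-r^{1,\epsilon}$, apply that proposition once, under $P$, exactly as stated, and read off the drift $H^\epsilon+\xi=cX$.

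Your route uses Proposition~\ref{prop-girsanov} verbatim. There is no restatement with the rescaled $Y^\epsilon$ as driving Brownian motion, and no need to know beforehand that the path of $X$ is independent of $Y^\epsilon$ under $Q^\epsilon$. It also gives an explicit density with respect to $P$ that involves only the perturbation $r^{1,\epsilon}$. The paper's route is more modular and matches the reference-measure picture: $Q^\epsilon$ makes the observation pure noise, and $Z^0$ is the likelihood of the linear model. Its cost is that the $Q^\epsilon$-version of Girsanov is left implicit, which is exactly the point you identified.

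One small remark: the cleanest justification that $\xi$ is independent of $W$ and square integrable is that $\xi=-r^{1,\epsilon}$ is precisely the $n=1$ remainder in Assumption~\ref{assumption-A3}. Arguing componentwise from $X$, $H^{[i]}$ and $r^{n,\epsilon}$ requires their joint independence from $W$. You, like the paper, are implicitly assuming that anyway.
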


Using this result, if we define
\begin{align*}
    \hat{I}_t^{i,\epsilon}=&\int_0^t\int_0^{t_{i}}\cdots \int_{0}^{t_2} (r_{t_1}^{1,\epsilon})^\top (\sigma(t_1)\sigma(t_1)^\top)^{-1} \sigma(t_1)dW_{t_1}\\
  &\times \cdots \times (r_{t_i}^{1,\epsilon})^\top (\sigma(t_i)\sigma(t_i)^\top)^{-1} \sigma(t_i)dW_{t_i},\\
    \hat{R}_t^\epsilon=&\int_0^t\int_0^{t_{n}}\cdots \int_{0}^{t_2} \hat{K}_{t_1}^\epsilon(r_{t_1}^{1,\epsilon})^\top (\sigma(t_1)\sigma(t_1)^\top)^{-1} \sigma(t_1)dW_{t_1}\\
  &\times \cdots \times (r_{t_n}^{1,\epsilon})^\top (\sigma(t_n)\sigma(t_n)^\top)^{-1} \sigma(t_n)dW_{t_n},\\
  \hat{K}_t^\epsilon=&\exp\left( \int_0^t(r_s^{1,\epsilon})^\top (\sigma(s)\sigma(s)^\top)^{-1}\sigma(s)dW_s\right.\\
  &\left.-\frac{1}{2}\int_0^t(r_s^{1,\epsilon})^\top(\sigma(s)\sigma(s)^\top)^{-1} r_s^{1,\epsilon} ds\right), 
  \end{align*}
  Then it holds that
\begin{align*}
  E_{Q^\epsilon}\!\left[ Z_t^0 |U_t^\epsilon I_t^{i,\epsilon}| \right]
  =E_{\hat{P}^\epsilon}\!\left[ |U_t^\epsilon I_t^{i,\epsilon}| \right]
  =E\!\left[ |U_t^\epsilon \hat{I}_t^{i,\epsilon}| \right].
\end{align*}
Since it immediately follows from Assumptions \ref{assumption-A2} and \ref{assumption-A3} that this expectation is finite, we can take the conditional expectation of the first $n$ terms of (\ref{eq-expansion-integrand}).

On the other hand, the evaluation of the remainder term is difficult, since $E[(K_t^\epsilon)^p]<\infty$ for $p>1$ does not hold without imposing strong regularity assumptions on $r_t^{1,\epsilon}$, and therefore we cannot apply H\"older's inequality.

\subsection{Estimation of the Remainder Term}\label{section:estimation-residue}
In view of the above difficulty, we instead aim to establish probabilistic control of the remainder term. Although this is weaker than $L^p$ estimates, it can be obtained within our general framework without imposing additional assumptions.

In the sequel, we first consider the expansion under the measure \( Q^\epsilon \) before transitioning to the problem under the original measure \( P \). For this purpose, we define the following notation:

\begin{definition}
  Let
\begin{align*}
  \eta_t^\epsilon = O_{\nu^\epsilon}^{T}(1)
\end{align*}
if a family of random variables $\{\eta_t^\epsilon\}_{0\leq \epsilon<1,t\geq 0}$ and family of probability measures $\{\nu_\epsilon\}_{0\leq \epsilon <1}$ on $\mathcal{F}$ satisfies
\begin{align*}
  \lim_{K\to \infty}\sup_{0\leq t\leq T}\sup_{0<\epsilon <r}\nu^\epsilon(|\eta_t^\epsilon|>K)=0
\end{align*}
for some $r>0$.
Furthermore, let us write $\eta^\epsilon=O_{\nu^\epsilon}^{T}(\epsilon^i)$ if
\begin{align*}
  \frac{\eta^\epsilon}{\epsilon^i}=O_{\nu^\epsilon}^{T}(1)
\end{align*}
for $i=0,1,2,\cdots$.
\end{definition}
\begin{remark}
  If $\eta_t^\epsilon=O_P^T(\epsilon^i)$ and $\theta_t^\epsilon=O_P^T(\epsilon^j)$, then it follows that $|\eta_t^\epsilon||\theta_t^\epsilon|=O_P^T(\epsilon^{i+j})$. In fact, for any $\delta>0$, if we take $K_1,K_2>0$ and $r>0$ such that  
  \begin{align*}
    P\left( \frac{|\eta_t^\epsilon|}{\epsilon^i}\geq K_1 \right)<\delta,~~
    P\left( \frac{|\theta_t^\epsilon|}{\epsilon^j}\geq K_2 \right)<\delta
  \end{align*}
  for any $0<\epsilon<r$ and $0\leq t \leq T$, then it holds that 
  \begin{align*}
    P\left( \frac{|\eta_t^\epsilon||\theta_t^\epsilon|}{\epsilon^{i+j}}< K_1K_2 \right)\geq 1-2\delta,
  \end{align*}
  which implies $|\eta_t^\epsilon||\theta_t^\epsilon|=O_P^T(\epsilon^{i+j})$. This result will be used frequently without any remarks in the following part. 
\end{remark}

Furthermore, the following lemma holds.
\begin{lemma}\label{lemma-equivalence-OP-OQ}
  Let $i=0,1,2,\cdots$. If a family of random variables $\{\eta_t^\epsilon\}_{0<\epsilon<1,t\geq 0}$ satisfies
  \begin{align*}
    \eta_t^\epsilon = O_{Q^\epsilon}^{T}(\epsilon^i),
  \end{align*}
  then it holds
  \begin{align*}
    \eta_t^\epsilon = O_{P}^{T}(\epsilon^i).
  \end{align*}
\end{lemma}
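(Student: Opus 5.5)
We transfer tightness from $Q^\epsilon$ to $P$ through the density $dP/dQ^\epsilon = Z_T^\epsilon$ on $\mathcal{F}_T$. Since $Q^\epsilon(A)=E[1_A(Z_T^\epsilon)^{-1}]$, we have $P(A)=E_{Q^\epsilon}[1_A Z_T^\epsilon]$ for $A\in\mathcal{F}_T$. (We tacitly take $\eta_t^\epsilon$ to be $\mathcal{F}_T$-measurable for $t\le T$, which is the relevant case.)

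Set $A=\{|\eta_t^\epsilon|>K\epsilon^i\}$. For any $L>0$ we split
\begin{align*}
  P(A)\le P(Z_T^\epsilon>L)+E_{Q^\epsilon}[1_A Z_T^\epsilon 1_{\{Z_T^\epsilon\le L\}}]\le P(Z_T^\epsilon>L)+L\,Q^\epsilon(A).
\end{align*}
Taking $\sup_{t\le T}\sup_{0<\epsilon<r}$ and then $K\to\infty$ kills the second term by hypothesis, for each fixed $L$. It therefore suffices to show that $Z_T^\epsilon$ is tight under $P$ uniformly in $\epsilon\in(0,r)$, i.e.\ $\lim_{L\to\infty}\sup_{\epsilon}P(Z_T^\epsilon>L)=0$. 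Letting $L\to\infty$ afterwards then gives the claim.

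The main step is this uniform tightness of $Z_T^\epsilon$ under $P$. We use the first expression in (\ref{eq-exp-Z}) and bound
\begin{align*}
  \log Z_T^\epsilon=\int_0^T (H_s^\epsilon)^\top(\sigma\sigma^\top)^{-1}\sigma\,dW_s+\tfrac12\int_0^T (H_s^\epsilon)^\top(\sigma\sigma^\top)^{-1}H_s^\epsilon\,ds.
\end{align*}
By (\ref{eq-assumption-sigma-2}), $(\sigma\sigma^\top)^{-1}\le C^{-1}I$, so the $ds$ term is at most $(2C)^{-1}\int_0^T|H_s^\epsilon|^2ds$. The stochastic integral has second moment $E\int_0^T (H_s^\epsilon)^\top(\sigma\sigma^\top)^{-1}H_s^\epsilon ds\le C^{-1}E\int_0^T|H_s^\epsilon|^2ds$. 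Hence, by Markov and Chebyshev, it suffices to have $\sup_{0<\epsilon<r}E\int_0^T|H_s^\epsilon|^2ds<\infty$.

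This follows from the decomposition $H^\epsilon=c(s)X_s+\sum_{i=1}^{n-1}H^{[i]}_s\epsilon^i+r_s^{\epsilon,n}$ (taking $c_0=0$), bounding each piece separately:
\begin{itemize}
  \item $E\int_0^T|c(s)X_s|^2ds\le \sup_{s\le T}E|X_s|^2\int_0^T|c|^2ds<\infty$, using the Gaussian moments of the linear SDE under \eqref{eq-assumption-a}, \eqref{eq-assumption-b} and \eqref{eq-assumption-c}.
  \item The $H^{[i]}$ terms are controlled by \ref{assumption-A2}.
  \item The remainder satisfies $E\int_0^T|r_s^{\epsilon,n}|^2ds\le T\,E[\sup_s|r_s^{\epsilon,n}|^p]^{2/p}\le CT\epsilon^{2n}$, by \ref{assumption-A3} with $p\ge2$ and Jensen.
\end{itemize}
Using $n=1$ is enough here.

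If one prefers to avoid the stochastic-integral bound, an alternative is to write $P(Z_T^\epsilon>L)=E_{Q^\epsilon}[Z_T^\epsilon 1_{\{Z_T^\epsilon>L\}}]$ and use uniform integrability. The direct moment bound above seems simplest, however, and I expect no real obstacle beyond checking the uniform $L^2$ bound on $H^\epsilon$.
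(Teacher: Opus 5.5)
Your proof is correct and has the same skeleton as the paper's. You write $P(A)=E_{Q^\epsilon}[1_A Z_T^\epsilon]$, split on $\{Z_T^\epsilon\le L\}$, and reduce the lemma to tightness of $Z_T^\epsilon$ under $P$, uniformly in $\epsilon$.

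The difference is how that tightness is obtained.

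\begin{itemize}
\item \emph{Paper.} It invokes \ref{assumption-A1} and Kolmogorov's continuity theorem to pick a version of $\epsilon\mapsto Z_T^\epsilon$ that is continuous. It then concludes that $\sup_\epsilon Z_T^\epsilon$ is finite pathwise.
\item \emph{You.} You bound $\log Z_T^\epsilon$ directly, using the It\^o isometry, the ellipticity condition \eqref{eq-assumption-sigma-2}, and a uniform bound on $E\int_0^T|H_s^\epsilon|^2ds$. That bound comes from \ref{assumption-A0} and \ref{assumption-A3} with $n=1$.
\end{itemize}

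The paper's route, if completed, gives more: almost-sure finiteness of $\sup_\epsilon Z_T^\epsilon$. Completing it is not immediate, and the paper does not do it:
\begin{itemize}
\item One must turn the integrated estimate in \ref{assumption-A1}, which only guarantees some $p\ge 1$, into a Kolmogorov-type moment bound for the exponential functional $Z_T^\epsilon$.
\item One must also restrict to a compact $\epsilon$-interval, since continuity on $[0,1)$ does not give boundedness.
\end{itemize}
Your route delivers exactly what the lemma needs, uniform tightness, with elementary and checkable steps, and it does not use \ref{assumption-A1} at all.

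Two small remarks:
\begin{itemize}
\item Your caveat about $\mathcal{F}_T$-measurability is unnecessary. $Q^\epsilon$ is defined on all of $\mathcal{F}$ with a strictly positive density, so $P(A)=E_{Q^\epsilon}[1_A Z_T^\epsilon]$ holds for every $A\in\mathcal{F}$.
\item Your $L^2$ bound on $H^\epsilon$ genuinely relies on the normalization $c_0=0$ adopted at the start of Section~\ref{section:asymptotic-expansion}. Condition \eqref{eq-assumption-c0} gives only $\int_0^t|c_0(s)|ds<\infty$, which would not control $\int_0^T|c_0(s)|^2ds$.
\end{itemize}
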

\begin{proof}
  It suffices to consider the case \( i = 0 \). By Assumption \ref{assumption-A1} and the Kolmogorov continuity theorem, we can take a version of \( Z_T^\epsilon \) that is continuous with respect to \( \epsilon \). In particular, we have
\begin{align*}
  \sup_{0 < \epsilon < 1} |Z_T^\epsilon(\omega)| < \infty
\end{align*}
for every \( \omega \in \Omega \). Therefore, for any \( \delta > 0 \), there exists a constant \( C \) such that
\begin{align*}
  \sup_{0 < \epsilon < 1} P(|Z_T^\epsilon| \geq C) < \frac{\delta}{2}.
\end{align*}

  On the other hand, it follows from $\eta_t^\epsilon = O_{Q^\epsilon}^{T}(1)$ that there exists a constant $K>0$ such that 
  \begin{align*}
    \sup_{0\leq t\leq T}\sup_{0< \epsilon <r}Q^\epsilon (|\eta_t^\epsilon|\geq K)<\frac{\delta}{2C}
  \end{align*}
  for some $r>0$. Then it holds for any $0\leq t\leq T$ and $0< \epsilon <r$ that 
  \begin{align*}
    P(|\eta_t^\epsilon|>K)=&E_{Q^\epsilon}[1_{\{|\eta_t^\epsilon|>K\}}Z_T^\epsilon]\\
    =&E_{Q^\epsilon}[1_{\{|\eta_t^\epsilon|>K\}}Z_T^\epsilon 1_{\{Z_T^\epsilon < C\}}]
    +E_{Q^\epsilon}[1_{\{|\eta_t^\epsilon|>K\}}Z_T^\epsilon 1_{\{Z_T^\epsilon \geq C\}}]\\
    \leq &CQ^\epsilon(|\eta_t^\epsilon|>K)+P(|\eta_t^\epsilon|>K,Z_T^\epsilon \geq C)\\
    \leq &C\times \frac{\delta}{2C}+\frac{\delta}{2}=\delta,
  \end{align*}
  which implies the conclusion.
\end{proof}

We now have the following result.
\begin{theorem}\label{theorem-asymptotic-expansion-1}
  For any $0\leq t\leq T$ and $n \in \mathbb{N}$, it holds that 
  \begin{align}
    \begin{split}
      &E_{Q^\epsilon}[U_t^\epsilon Z_t^\epsilon|\mathcal{Y}_t^\epsilon]\\
    =&E_{Q^\epsilon}[U_{t}^{n-1,\epsilon} Z_t^0|\mathcal{Y}_t^\epsilon]+\sum_{i=1}^{n-1}E_{Q^\epsilon}\left[U_{t}^{n-i-1,\epsilon} Z_t^0J_t^{i,\epsilon}\middle|\mathcal{Y}_t^\epsilon\right]+O_{P}^T(\epsilon^n),
    \end{split}\label{eq-expansion-numerator}
  \end{align}
  where 
  \begin{align}
    \label{eq4-24}&H_{t}^{i,\epsilon}=\sum_{j=1}^i H_t^{[j]}\epsilon^j,~~U_{t}^{i,\epsilon}=\sum_{j=0}^i U_t^{[j]}\epsilon^j\\
    \begin{split}
      &J_t^{i,\epsilon}=\int_0^t\int_0^{t_{i}}\cdots \int_{0}^{t_2} (H_{t_1}^{n-1,\epsilon})^\top (\sigma(t_1)\sigma(t_1)^\top)^{-1} (dY_{t_1}^\epsilon-c(t_1)X_{t_1}dt_1)\\
    &\times \cdots \times (H_{t_i}^{n-1,\epsilon})^\top (\sigma(t_i)\sigma(t_i)^\top)^{-1} (dY_{t_i}^\epsilon-c(t_i)X_{t_i}dt_i),
    \end{split}
  \end{align}
  for $i=0,1,2,\cdots$.
\end{theorem}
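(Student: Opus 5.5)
We start from the decomposition of Proposition \ref{prop-expansion-Z} and take $E_{Q^\epsilon}[\,\cdot\,|\mathcal{Y}_t^\epsilon]$ of both sides of \eqref{eq-expansion-integrand}. The discrepancy between the right-hand side of \eqref{eq-expansion-numerator} and $E_{Q^\epsilon}[U_t^\epsilon Z_t^\epsilon|\mathcal{Y}_t^\epsilon]$ splits into three groups of terms.
\begin{itemize}
\item[(a)] The remainder $E_{Q^\epsilon}[U_t^\epsilon Z_t^0 R_t^\epsilon|\mathcal{Y}_t^\epsilon]$.
\item[(b)] The differences obtained by replacing $r^{1,\epsilon}=H^{n-1,\epsilon}+r^{\epsilon,n}$ inside $I_t^{i,\epsilon}$ by $H^{n-1,\epsilon}$. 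This turns $I_t^{i,\epsilon}$ into $J_t^{i,\epsilon}$.
\item[(c)] The differences obtained by replacing $U_t^\epsilon$ by the truncations $U_t^{n-i-1,\epsilon}$, together with dropping the terms in the product $U_t^{n-i-1,\epsilon}J_t^{i,\epsilon}$ of order $\geq\epsilon^n$. These are bookkeeping: $J_t^{i,\epsilon}$ is a polynomial in $\epsilon$ starting at order $\epsilon^i$.
\end{itemize}
It suffices to show that each group is $O_{Q^\epsilon}^T(\epsilon^n)$ and then invoke Lemma \ref{lemma-equivalence-OP-OQ}. Since a conditional expectation of a nonnegative variable is controlled in probability by Markov's inequality under the same measure, a bound $E_{Q^\epsilon}[Z_t^0|\xi^\epsilon_t|]\le C\epsilon^n$ uniformly in $t\le T$, $\epsilon<r$ gives $E_{Q^\epsilon}[Z_t^0\xi_t^\epsilon|\mathcal{Y}_t^\epsilon]=O_{Q^\epsilon}^T(\epsilon^n)$.

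For (b) and (c) we transfer to $P$ via Proposition \ref{prop-girsanov-3}: $E_{Q^\epsilon}[Z_t^0|\xi|]=E[|\hat\xi|]$, where the $dY-cX\,dt$ integrals become $\sigma\,dW$ integrals, and these are independent of the integrands by \ref{assumption-A2}, \ref{assumption-A3}, \ref{assumption-A6}. Iterated Itô integrals are then bounded in $L^2$ by BDG. By multilinearity, each difference $I^{i,\epsilon}-J^{i,\epsilon}$ is a sum of iterated integrals containing at least one factor $r^{\epsilon,n}$. One such factor contributes $\epsilon^n$ by \ref{assumption-A3}, and the $H^{[j]}$ factors are in $L^2$ by \ref{assumption-A2}. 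We use Hölder with $p\ge2$, and \ref{assumption-A6} for $U$ with all moments. If integrability of products of several $H^{[j]}$ fails in $L^2$, we do not need moments at all: we only need the iterated integrals to be finite a.s. and $\epsilon$-scaled in probability. Conditioning on $\sigma(X,H^{[\cdot]},r)$, which is independent of $W$, the iterated integral is Gaussian-chaos with conditional variance given by products of $\int|H|^2$, so it is $O_P$ with the right power of $\epsilon$. This probabilistic route is the robust one. The same computation handles (c), using $u_t^{n-i,\epsilon}=O(\epsilon^{n-i})$ in every $L^p$.

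The main obstacle is (a), because $K^\epsilon$ has only an $L^1$ property. The plan is to avoid moments of $\hat K^\epsilon$ and instead use a localisation in probability. After the transfer to $P$, $\hat R_t^\epsilon$ is an $n$-fold iterated integral with $\hat K^\epsilon_{t_1}$ in the innermost position. We introduce the stopping time $\tau_N=\inf\{s:\hat K_s^\epsilon\vee(\hat K_s^\epsilon)^{-1}\ge N\}$. On $\{\tau_N\ge T\}$ the integrand is bounded by $N$ and the previous BDG/conditional-Gaussian argument gives order $\epsilon^{n}$, since each of the $n$ factors $r^{1,\epsilon}$ is $O(\epsilon)$. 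We must also show $\sup_{\epsilon}P(\tau_N<T)\to0$. This follows because $\log\hat K^\epsilon$ is a continuous semimartingale whose quadratic variation $\int_0^T|r^{1,\epsilon}|^2ds$ is bounded in probability uniformly in $\epsilon$, for instance by the Kolmogorov-continuous version from \ref{assumption-A1}, as in the proof of Lemma \ref{lemma-equivalence-OP-OQ}.

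The remaining subtlety is passing "in probability" through the conditional expectation, where Markov no longer applies directly. We handle this by writing
\[E_{Q^\epsilon}[\xi|\mathcal{Y}]=E_{Q^\epsilon}[\xi 1_{\{\tau_N\ge T\}}|\mathcal{Y}]+E_{Q^\epsilon}[\xi1_{\{\tau_N<T\}}|\mathcal{Y}].\]
The first term is treated with Markov. For the second, we note that $Q^\epsilon(E_{Q^\epsilon}[|\xi|1_A|\mathcal{Y}]\ne0)\le Q^\epsilon(Q^\epsilon(A|\mathcal{Y})>0)$, which is not small in general. Instead we apply this to the full unconditioned identity: the left side of \eqref{eq-expansion-numerator} minus the main terms equals $E_{Q^\epsilon}[U Z^0 R^\epsilon|\mathcal{Y}]$, which has finite $L^1(Q^\epsilon)$ norm. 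We then bound $E_{Q^\epsilon}[Z^0|U R^\epsilon|1_{\{\tau_N<T\}}]$ using uniform integrability of $\{Z^0 U R^\epsilon\}_\epsilon$, inherited from $L^1$-continuity in $\epsilon$ of $K^\epsilon$ (Scheffé). Making this last step uniform is where I expect the real work to lie.
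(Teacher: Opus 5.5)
\textbf{There is a genuine gap in step (a).} Your handling of (b), (c), and of the remainder on the good set $\{\tau_N\ge T\}$, matches the paper in substance: transfer to $P$ via Proposition \ref{prop-girsanov-3}, then $L^1$ bounds by H\"older and BDG. The gap is the bad-set piece $E_{Q^\epsilon}[U_t^\epsilon Z_t^0R_t^\epsilon 1_{\{\tau_N<T\}}\mid\mathcal{Y}_t^\epsilon]$, and the uniform-integrability plan cannot close it.

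\begin{enumerate}
\item Scheff\'e gives uniform integrability of $\hat K^\epsilon$ only. This does not pass to the iterated stochastic integral $\hat R^\epsilon$, whose $L^1$ norm via BDG already involves $\sup_s\hat K^\epsilon_s$.
\item More decisively, even a uniform bound $E_{Q^\epsilon}[Z_t^0|U_t^\epsilon R_t^\epsilon|1_{\{\tau_N<T\}}]\le\eta(N)$ with $\eta(N)\to0$ gives, via Markov, only $Q^\epsilon(|\cdot|>K\epsilon^n)\le\eta(N)/(K\epsilon^n)$. This is useless as $\epsilon\to0$. An $O^T_{Q^\epsilon}(\epsilon^n)$ conclusion requires the bad piece itself to be of order $\epsilon^n$, not merely small.
\end{enumerate}
So letting $N\to\infty$ is the wrong direction. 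You need two things: a bad event of polynomially small probability, and a way through the conditional expectation that uses no moments of $K^\epsilon$.

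\textbf{The missing idea is to localize the whole product $U_t^\epsilon Z_t^\epsilon$ before expanding.} Then on the bad event the Kallianpur--Striebel formula (Proposition \ref{prop-Kallianpur-striebel}) removes $Z^\epsilon$ altogether:
$E_{Q^\epsilon}[U_t^\epsilon Z_t^\epsilon 1_A\mid\mathcal{Y}_t^\epsilon]=E[U_t^\epsilon 1_A\mid\mathcal{Y}_t^\epsilon]\,E_{Q^\epsilon}[Z_t^\epsilon\mid\mathcal{Y}_t^\epsilon]$.
The first factor has $L^1(P)$ norm at most $E[|U_t^\epsilon|^p]^{1/p}P(A)^{1/q}$. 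The second has $Q^\epsilon$-mean one, so it is $O^T_{Q^\epsilon}(1)$, hence $O^T_P(1)$ by Lemma \ref{lemma-equivalence-OP-OQ}.

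The paper takes $A=A_t^\epsilon=\{\sup_{s\le t}|r^{1,\epsilon}_s|\ge1\}$, which does three jobs:
\begin{enumerate}
\item It is independent of $W$.
\item $P(A_t^\epsilon)\le E[\sup_{s\le t}|r_s^{1,\epsilon}|^{qn}]=O(\epsilon^{qn})$ by \ref{assumption-A3}.
\item On its complement $\int_0^t|r_s^{1,\epsilon}|^2ds\le t$ deterministically. Cauchy--Schwarz against the exponential martingale of $2p\,r^{1,\epsilon}$ then gives $E[|1_{(A_t^\epsilon)^c}\hat K_s^\epsilon|^p]\le C_{p,T}$ uniformly in $\epsilon$, and the remainder is $O(\epsilon^n)$ in $L^1$ by H\"older and BDG.
\end{enumerate}

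Your $\tau_N$ with $N>1$ fixed could be rescued the same way. Its bad event also has $P$-probability $O(\epsilon^m)$ for large $m$, since $\log K^\epsilon=O(\epsilon)$ in $L^p$. But this works only if you write the bad piece as $U_t^\epsilon Z_t^\epsilon 1_B-U_t^\epsilon Z_t^0(1+\sum_i I_t^{i,\epsilon})1_B$, so that the Bayes identity applies. Splitting only the remainder, as you propose, leaves you no handle on it.

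\textbf{A related problem in (b) and (c).} Your moment-free fallback there fails for the same reason you identified at the end. An $O_P$ bound on $\xi$ does not pass through $E_{Q^\epsilon}[Z_t^0\,\xi\mid\mathcal{Y}_t^\epsilon]$, so genuine $L^1(\hat P^\epsilon)$ bounds are required there.
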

\begin{proof}
In this proof, \( C \) denotes a generic positive constant. In particular, if we write \( C_q \), it may depend on some factor \( q \). Also, we assume \( 0 \leq t \leq T \) throughout this proof.

We first define
  \begin{align*}
    A_t^\epsilon = \left\{ \sup_{0 \leq s \leq t} |H_s^\epsilon - H_s^0| \geq 1 \right\} = \left\{ \sup_{0 \leq s \leq t} |r_s^{1,\epsilon}| \geq 1 \right\}.
  \end{align*}
 Then we have
  \begin{align}
    &E_{Q^\epsilon}\left[ U_t^\epsilon Z_t^\epsilon \middle| \mathcal{Y}_t^\epsilon \right] \nonumber \\
    \label{eq4-10}
    =& \frac{E_{Q^\epsilon}\left[ U_t^\epsilon Z_t^\epsilon 1_{A_t^\epsilon} \middle| \mathcal{Y}_t^\epsilon \right]}{E_{Q^\epsilon}[Z_t^\epsilon | \mathcal{Y}_t^\epsilon]} E_{Q^\epsilon}[Z_t^\epsilon | \mathcal{Y}_t^\epsilon] 
    + E_{Q^\epsilon}\left[ U_t^\epsilon Z_t^\epsilon 1_{(A_t^\epsilon)^c} \middle| \mathcal{Y}_t^\epsilon \right].
  \end{align}
  
  For the first term on the right-hand side, (\ref{eq-Kallianpur-Striebel-epsilon}) and Assumption \ref{assumption-A3} yield
  \begin{align}
    E\left[ \left| \frac{E_{Q^\epsilon}\left[ U_t^\epsilon Z_t^\epsilon 1_{A_t^\epsilon} \middle| \mathcal{Y}_t^\epsilon \right]}{E_{Q^\epsilon}[Z_t^\epsilon | \mathcal{Y}_t^\epsilon]} \right| \right]
    &= E\left[ \left| E\left[ U_t^\epsilon 1_{A_t^\epsilon} \middle| \mathcal{Y}_t^\epsilon \right] \right| \right] \nonumber \\
    &\leq E\left[ \left| U_t^\epsilon 1_{A_t^\epsilon} \right| \right] \nonumber \\
    &\leq E\left[ |U_t^\epsilon|^p \right]^\frac{1}{p} P\left( \sup_{0 \leq s \leq t} |r_s^{1,\epsilon}| \geq 1 \right)^\frac{1}{q} \nonumber \\
    &\leq E\left[ |U_t^\epsilon|^p \right]^\frac{1}{p} E\left[ \sup_{0 \leq s \leq t} |r_s^{1,\epsilon}|^{qn} \right]^\frac{1}{q} \nonumber \\
    \label{eq4-12}
    &\leq C_T \epsilon^n,
  \end{align}
  for sufficiently large $p$ and $\frac{1}{p}+\frac{1}{q}=1$, which implies
  \begin{align*}
    \frac{E_{Q^\epsilon}\left[ U_t^\epsilon Z_t^\epsilon 1_{A_t^\epsilon} \middle| \mathcal{Y}_t^\epsilon \right]}{E_{Q^\epsilon}[Z_t^\epsilon | \mathcal{Y}_t^\epsilon]} = O_P^T(\epsilon^n).
  \end{align*}
  Furthermore, it follows from the definition of \( Q^\epsilon \) that
  \begin{align*}
    E_{Q^\epsilon}[E_{Q^\epsilon}[Z_t^\epsilon | \mathcal{Y}_t^\epsilon]] = E_{Q^\epsilon}[Z_t^\epsilon] = 1.
  \end{align*}
  Therefore, it holds that \( E_{Q^\epsilon}[Z_t^\epsilon \mid \mathcal{Y}_t^\epsilon] = O_{Q^\epsilon}^T(1) \). By Lemma \ref{lemma-equivalence-OP-OQ}, this implies that \( E_{Q^\epsilon}[Z_t^\epsilon \mid \mathcal{Y}_t^\epsilon] = O_P^T(1) \). Combining these results, we obtain
  \begin{align}
    \label{eq4-11}
    \frac{E_{Q^\epsilon}\left[ U_t^\epsilon Z_t^\epsilon 1_{A_t^\epsilon} \middle| \mathcal{Y}_t^\epsilon \right]}{E_{Q^\epsilon}[Z_t^\epsilon | \mathcal{Y}_t^\epsilon]} E_{Q^\epsilon}[Z_t^\epsilon | \mathcal{Y}_t^\epsilon] = O_P^T(\epsilon^n).
  \end{align}
  Thus, it follows from (\ref{eq4-10}) that
\begin{align}
  &E_{Q^\epsilon}\left[ U_t^\epsilon Z_t^\epsilon \middle| \mathcal{Y}_t^\epsilon \right] 
  =E_{Q^\epsilon}\left[ U_t^\epsilon Z_t^\epsilon 1_{(A_t^\epsilon)^c} \middle| \mathcal{Y}_t^\epsilon \right]+O_P^T(\epsilon^n).
  \label{eq-exp-U-Z-A}
\end{align}
By Proposition \ref{prop-expansion-Z}, the dominant term on the right-hand side can be expanded as
  \begin{align}
    &E_{Q^\epsilon}\left[ U_t^\epsilon Z_t^\epsilon 1_{(A_t^\epsilon)^c} \middle| \mathcal{Y}_t^\epsilon \right]\nonumber\\
    \begin{split}
      =&E_{Q^\epsilon}\left[ U_t^\epsilon Z_t^0 1_{(A_t^\epsilon)^c} \middle| \mathcal{Y}_t^\epsilon \right]
    +\sum_{i=1}^{n-1}E_{Q^\epsilon}\left[ U_t^\epsilon I_t^{i,\epsilon}Z_t^0 1_{(A_t^\epsilon)^c} \middle| \mathcal{Y}_t^\epsilon \right]\\
    &+E_{Q^\epsilon}\left[ U_t^\epsilon R_t^{\epsilon}Z_t^0 1_{(A_t^\epsilon)^c} \middle| \mathcal{Y}_t^\epsilon \right].
    \end{split}\label{eq-decomposition-UZA}    
  \end{align}
  For the first $n$ terms on the right-hand side, it follows from Proposition \ref{prop-girsanov-3} together with Assumptions \ref{assumption-A2}, \ref{assumption-A3}, and \ref{assumption-A6} that
  \begin{align*}
    &E_{Q^\epsilon}\left[ \left|E_{Q^\epsilon}\left[ U_t^\epsilon I_t^{i,\epsilon}Z_t^0 1_{(A_t^\epsilon)^c} \middle| \mathcal{Y}_t^\epsilon \right]-E_{Q^\epsilon}\left[ U_t^{n-i-1,\epsilon} J_t^{i,\epsilon}Z_t^0 \middle| \mathcal{Y}_t^\epsilon \right] \right| \right]\\
    \leq &E_{Q^\epsilon}\left[ Z_0|U_t^\epsilon I_t^{i,\epsilon}1_{(A_t^\epsilon)^c}-U_t^{n-i-1,\epsilon} J_t^{i,\epsilon}| \right]\\
    =&E_{\tilde{P}^\epsilon}\left[ |U_t^\epsilon I_t^{i,\epsilon}1_{(A_t^\epsilon)^c}-U_t^{n-i-1,\epsilon} J_t^{i,\epsilon}| \right]\\
    =&E\left[ |U_t^\epsilon \hat{I}_t^{i,\epsilon}1_{(A_t^\epsilon)^c}-U_t^{n-i-1,\epsilon} \hat{J}_t^{i,\epsilon}| \right]\\
    \leq &C_T\epsilon^n,
  \end{align*}
  where 
  \begin{align*}
    \hat{J}_t^{i,\epsilon}=&\int_0^t\int_0^{t_{i}}\cdots \int_{0}^{t_2} (H_{t_1}^{n-1,\epsilon})^\top (\sigma(t_1)\sigma(t_1)^\top)^{-1} \sigma(t_1)dW_{t_1}\\
    &\times \cdots \times (H_{t_i}^{n-1,\epsilon})^\top (\sigma(t_i)\sigma(t_i)^\top)^{-1} \sigma(t_i)dW_{t_i}
  \end{align*}
  and $\hat{I}_t^{i,\epsilon}$ is defined in (\ref{def-I-hat}). Therefore, it follows from Lemma \ref{lemma-equivalence-OP-OQ} that
  \begin{align}
    E_{Q^\epsilon}\left[ U_t^\epsilon I_t^{i,\epsilon}Z_t^0 1_{(A_t^\epsilon)^c} \middle| \mathcal{Y}_t^\epsilon \right]=E_{Q^\epsilon}\left[ U_t^{n-i-1,\epsilon} J_t^{i,\epsilon}Z_t^0 \middle| \mathcal{Y}_t^\epsilon \right]+O_P^T(\epsilon^n).\label{eq-estimate-difference}
  \end{align}
  For the final term of (\ref{eq-decomposition-UZA}), noting that for every $p\geq 1$, it holds
\begin{align}
  &E[|1_{(A_t^\epsilon)^c}\hat{K}_{s}|^p]\nonumber\\
  =&E\Biggl[1_{(A_t^\epsilon)^c}\exp\left( p\int_0^s(r_u^{1,\epsilon})^\top (\sigma(u)\sigma(u)^\top)^{-1} dW_u\right.\nonumber\\
  &\left.\qquad\qquad \qquad\qquad-\frac{p}{2}\int_0^s(r_u^{1,\epsilon})^\top(\sigma(u)\sigma(u)^\top)^{-1} r_u^{1,\epsilon} du\right)\Biggr]\nonumber\\
  = &E_{Q^\epsilon}\Biggl[\exp\left( \int_0^s(2p r_u^{1,\epsilon})^\top (\sigma(u)\sigma(u)^\top)^{-1} dW_u\right.\nonumber\\
  &\left.\qquad\qquad \qquad\qquad-p^2\int_0^s(r_u^{1,\epsilon})^\top(\sigma(u)\sigma(u)^\top)^{-1} r_u^{1,\epsilon} ds\right)\nonumber\\
  &\times 1_{(A_t^\epsilon)^c}\exp\left( \frac{2p^2-p}{2}\int_0^s(r_u^{1,\epsilon})^\top(\sigma(u)\sigma(u)^\top)^{-1} r_u^{1,\epsilon} du\right) \Biggr]\nonumber\\
  \leq &E_{Q^\epsilon}\Biggl[\exp\left( \int_0^s(2p r_u^{1,\epsilon})^\top (\sigma(u)\sigma(u)^\top)^{-1} dW_u\right.\nonumber\\
  &\left.\qquad\qquad \qquad\qquad -\frac{1}{2}\int_0^s(2pr_u^{1,\epsilon})^\top(\sigma(u)\sigma(u)^\top)^{-1} 2pr_u^{1,\epsilon} du\right)\Biggr]^\frac{1}{2}\nonumber\\
  &\times E_{Q^\epsilon}\left[ 1_{(A_t^\epsilon)^c}\exp\left( \frac{2p^2-p}{2}\int_0^s(r_u^{1,\epsilon})^\top(\sigma(u)\sigma(u)^\top)^{-1} r_u^{1,\epsilon} du\right) \right]^\frac{1}{2}\nonumber\\
  \leq &C_{p,T}\label{eq-estimate-K}
\end{align}
and that $1_{(A_t^\epsilon)^c}$ is independent of $W$, we obtain 
\begin{align}
  &E_{Q^\epsilon}\left[ \left|E_{Q^\epsilon}\left[ U_t^\epsilon R_t^{\epsilon}Z_t^0 1_{(A_t^\epsilon)^c} \middle| \mathcal{Y}_t^\epsilon \right]\right| \right]\nonumber\\
  \leq &E\left[ |U_t^\epsilon \hat{R}_t^{\epsilon}| 1_{(A_t^\epsilon)^c} \right]\nonumber\\
  =&E[|U_t^\epsilon|^p]^\frac{1}{p}E\Biggl[\left|1_{(A_t^\epsilon)^c}\int_0^t\int_0^{t_{n}}\cdots \int_{0}^{t_2} \hat{K}_{t_1}^\epsilon(r_{t_1}^{1,\epsilon})^\top (\sigma(t_1)\sigma(t_1)^\top)^{-1} \sigma(t_1)dW_{t_1}\right.\nonumber\\
  &\left.\times \cdots \times (r_{t_n}^{1,\epsilon})^\top (\sigma(t_n)\sigma(t_n)^\top)^{-1} \sigma(t_n)dW_{t_n}\right|^q\Biggr]^\frac{1}{q} \nonumber\\
  \label{eq4-15}\begin{split}
    \leq &C_T \int_0^t\int_0^{t_{n}}\cdots \int_{0}^{t_2} E\Bigl[1_{(A_t^\epsilon)^c}(\hat{K}_{t_1}^\epsilon)^q |r_{t_1}^{1,\epsilon}|^q\cdots |r_{t_n}^{1,\epsilon}|^q\Bigr]dt_1\cdots dt_n.
  \end{split}  
\end{align}
By H\"older's inequality, Assumption \ref{assumption-A3}, and (\ref{eq-estimate-K}), it follows that the term in (\ref{eq4-15}) is \( O(\epsilon^n) \). Thus, by Lemma \ref{lemma-equivalence-OP-OQ}, we conclude that
\begin{align}
  E_{Q^\epsilon}\left[ U_t^\epsilon R_t^{\epsilon} Z_t^0 1_{(A_t^\epsilon)^c} \middle| \mathcal{Y}_t^\epsilon \right]
  = O_P^T(\epsilon^n). 
  \label{eq-estimate-remainder}
\end{align}

Combining (\ref{eq4-11}), (\ref{eq-decomposition-UZA}), (\ref{eq-estimate-difference}), and (\ref{eq-estimate-remainder}), we obtain the desired result.
\end{proof}

By Theorem \ref{theorem-asymptotic-expansion-1}, we can derive the asymptotic expansion of the quotient (\ref{eq-Kallianpur-Striebel-epsilon}). Let us write 
\begin{align}
  \label{eq-def-tilde-E-epsilon}
  \tilde{E}_t^\epsilon[U|\mathcal{Y}_t^\epsilon]
  =
  \frac{E_{Q^\epsilon}[UZ_t^0|\mathcal{Y}_t^\epsilon]}
       {E_{Q^\epsilon}[Z_t^0|\mathcal{Y}_t^\epsilon]}
\end{align}
for $U \in L^1(\Omega,\mathcal{X}_t,P)$. 
Note that the right-hand side is well defined since $Z_t^0>0$ almost surely.

To control the denominator in (\ref{eq-def-tilde-E-epsilon}), we establish the following lower bound. 
Lemmas \ref{lemma-boundedness-phi} and \ref{prop-expectation-zeta} in Section \ref{section-appendix} yield the following result.
\begin{proposition}\label{prop-lower-bound}
  We can take a version of $E_{Q^\epsilon}[Z_t^0|\mathcal{Y}_t^\epsilon]$ such that almost surely
  \begin{align*}
    \inf_{0\leq t\leq T}E_{Q^\epsilon}[Z_t^0|\mathcal{Y}_t^\epsilon]>0.
  \end{align*}
\end{proposition}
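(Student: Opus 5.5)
The plan is to construct an explicit version of $E_{Q^\epsilon}[Z_t^0|\mathcal{Y}_t^\epsilon]$ as a functional of the observation path that is jointly continuous in $t$ and strictly positive. The lower bound then follows from compactness of $[0,T]$.

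\textbf{Step 1: reduce to a Gaussian integral.} By Proposition \ref{prop-girsanov-2}, under $Q^\epsilon$ the process $X$ (which is independent of $W$ under $P$) is independent of $Y^\epsilon$ and keeps its Gaussian law. Hence the conditional expectation can be computed by freezing the observation path $y=Y^\epsilon(\omega)$ and integrating over $X$ with respect to its law $\mu_X$:
\begin{align*}
  \Phi_t(y)=\int \exp\left( \int_0^t (c(s)x_s)^\top(\sigma(s)\sigma(s)^\top)^{-1}dy_s-\frac12\int_0^t |\sigma(s)^{-1}\text{-weighted } c(s)x_s|^2ds \right)\mu_X(dx).
\end{align*}
The stochastic integral $\int (c x)^\top(\sigma\sigma^\top)^{-1}dy$ must be given a pathwise meaning. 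I would do this by integrating by parts, moving the $dy$ onto the semimartingale $x$; this is legitimate because $x$ is a linear Gaussian diffusion. Alternatively, one can interpret it as the stochastic integral on the product space. Either way, one obtains a version $\Phi_t(Y^\epsilon)$ of $E_{Q^\epsilon}[Z_t^0|\mathcal{Y}_t^\epsilon]$. I expect Lemma \ref{lemma-boundedness-phi} to supply exactly the integrability and boundedness of this functional uniformly in $t\le T$.

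\textbf{Step 2: positivity and continuity.} For each fixed $t$ and each fixed $y$, the integrand is strictly positive and finite $\mu_X$-a.s., so $\Phi_t(y)>0$. To pass from pointwise positivity to a bound uniform in $t$, I would show that $t\mapsto \Phi_t(Y^\epsilon)$ is continuous almost surely. For this I would use the moment bounds from Lemma \ref{prop-expectation-zeta}, presumably estimates of the form $E[|\zeta_t-\zeta_s|^p]\le C|t-s|^{1+\delta}$ for the relevant exponential functional, and apply the Kolmogorov continuity theorem to obtain a continuous modification in $t$. Then $\inf_{0\le t\le T}\Phi_t$ is attained at some $t^*(\omega)$, and it is strictly positive because $\Phi_{t^*}>0$.

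\textbf{Alternative route.} A more robust way to get positivity, avoiding a pointwise argument, is Jensen's inequality: $\Phi_t\ge \exp\big(\int \log(\text{integrand})\,d\mu_X\big)$. The exponent is an explicit Gaussian expectation, affine in $y$ plus a deterministic quadratic term. It is therefore finite and continuous in $t$ by \ref{assumption-A0}, since $\int_0^T|c|^2<\infty$ and $\sigma\sigma^\top$ is uniformly nondegenerate. This gives $\inf_t\Phi_t\ge \exp(-\sup_t|\text{affine term}|)>0$ as soon as the affine term is continuous in $t$. I would use this as the main argument, with Kolmogorov's theorem only used to fix a continuous version of that affine Gaussian functional.

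The main obstacle is the pathwise construction. A regular version of the conditional expectation must be defined simultaneously for all $t$ off a single null set, and the stochastic integral against $dY^\epsilon$ has to be handled consistently with that regular version. I would first try to handle this through integration by parts in the exponent.
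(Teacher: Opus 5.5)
Your proposal is correct, and its main argument, the Jensen bound, takes a genuinely different route from the paper.

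\textbf{Shared setup.} Your Step 1 matches the paper's setup. The paper forms $\hat Y_s^\epsilon=\int_0^s c(u)^\top(\sigma(u)\sigma(u)^\top)^{-1}dY_u^\epsilon$ and integrates by parts to write $Z_t^0=z_t(y)|_{y=\hat Y^\epsilon}$. It then takes $E[z_t(y)]|_{y=\hat Y^\epsilon}$ as the version, delegating the identification to an external lemma.

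\textbf{Where the routes split.} The paper then \emph{computes} $E[z_t(y)]$ exactly. It introduces the Riccati solution $\phi(s;t)$, changes measure with the Gaussian density of Lemma \ref{lemma-measure-change}, and obtains the closed form of Lemma \ref{prop-expectation-zeta}. Lemma \ref{lemma-boundedness-phi} then supplies local boundedness of $\phi$ in $(s,t)$; it is not a moment or Kolmogorov-type estimate, as you guessed. This makes every exponent in the closed form bounded uniformly in $t\le T$ for continuous $y$, and $\phi(0;t)\le 0$ keeps the determinant factor harmless.

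Your step $E[z_t(y)]\ge\exp(E[\log z_t(y)])$ bypasses all of this. With $m_s=E[X_s]$,
\begin{align*}
E[\log z_t(y)]&=m_t^\top y(t)-\int_0^t y(s)^\top a(s)m_s\,ds-\frac12\int_0^t \mathrm{tr}\bigl(S(s)E[X_sX_s^\top]\bigr)ds .
\end{align*}
This needs only first and second moments of $X$ together with Assumption \ref{assumption-A0}. It yields $\inf_{t\le T}E[z_t(y)]\ge\exp\bigl(-C_T(1+\sup_{s\le T}|y(s)|)\bigr)$. The law of $\hat Y^\epsilon$ under $Q^\epsilon$ does not depend on $\epsilon$, so this also gives \eqref{eq4-23} with an explicit tail.

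\textbf{What each buys.} The paper's route gives an exact expression for the normalizing factor. Yours gives only a lower bound, but it is more elementary and more robust.

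\textbf{Three smaller points.}

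First, the Kolmogorov-based version of Step 2 is unnecessary, and the Jensen route needs no Kolmogorov argument either. For fixed continuous $y$, $t\mapsto E[\log z_t(y)]$ is deterministic, and only its boundedness on $[0,T]$ is used, not its continuity. Your final bound should also include the deterministic quadratic term. That term is bounded below by $-\frac12\sup_{s\le T}E|X_s|^2\int_0^T\|S(s)\|\,ds$, which is finite by Assumption \ref{assumption-A0}.

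Second, the path to freeze is $\hat Y^\epsilon$, not $Y^\epsilon$. Since $c^\top(\sigma\sigma^\top)^{-1}$ is only square integrable in time, you cannot integrate by parts against a frozen path $y=Y^\epsilon(\omega)$; the $dY^\epsilon$-integral must be formed stochastically first.

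Third, once that is done, your concern about simultaneity in $t$ disappears. The map $(t,y)\mapsto E[z_t(y)]$ is deterministic, so $t\mapsto E[z_t(y)]|_{y=\hat Y^\epsilon(\omega)}$ is defined for all $t$ at once and is a version for each fixed $t$. The lower bound then holds for every $\omega$ whose $\hat Y^\epsilon$ path is continuous.
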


Using this result, the quotient (\ref{eq-Kallianpur-Striebel-epsilon}) is expanded as follows.

\begin{theorem}\label{theorem-asymptotic-expansion-2}
  Under Assumptions \ref{assumption-A0}--\ref{assumption-A6}, it holds that
  \begin{align*}
    &E[U_t^\epsilon |\mathcal{Y}_t^\epsilon]\\
    =&\frac{\displaystyle \tilde{E}_t^\epsilon[U_{t}^{n-1,\epsilon} |\mathcal{Y}_t^\epsilon]+\sum_{i=1}^{n-1}\tilde{E}_t^\epsilon[U_{t}^{n-i-1,\epsilon} J_t^i|\mathcal{Y}_t^\epsilon]}{\displaystyle 1+\sum_{i=1}^{n-1}\tilde{E}_t^\epsilon[J_t^i|\mathcal{Y}_t^\epsilon]}+O_P^T(\epsilon^n)\\
    =&\Biggl(\tilde{E}_t^\epsilon[U_{t}^{n-1,\epsilon} |\mathcal{Y}_t^\epsilon]+\sum_{i=1}^{n-1}\tilde{E}_t^\epsilon[U_{t}^{n-i-1,\epsilon} J_t^i|\mathcal{Y}_t^\epsilon]\Biggr)
    \Biggl\{1+\sum_{j=1}^{n-1}(-1)^{j}\Biggl(\sum_{i=1}^{n-1}\tilde{E}_t^\epsilon[J_t^i |\mathcal{Y}_t^\epsilon]\Biggr)^{j}\Biggr\}\\
    &+O_P^T(\epsilon^n),
  \end{align*}
  for every $n \in \mathbb{N}$.
  Here, $H_{t,i}^\epsilon$ and $U_{t,i}^\epsilon$ is given in (\ref{eq4-24}), and
  \begin{align}
    \label{def-J-i}\begin{split}
      J_t^i=&\int_0^t\int_0^{t_{i}}\cdots \int_{0}^{t_2} (H_{t_1}^{n-1,\epsilon})^\top (\sigma(t_1)\sigma(t_1)^\top)^{-1} (dY_{t_1}^\epsilon-c(t_1)X_{t_1}dt_1)\\
    &\times \cdots \times (H_{t_i}^{n-1,\epsilon})^\top (\sigma(t_i)\sigma(t_i)^\top)^{-1} (dY_{t_i}^\epsilon-c(t_i)X_{t_i}dt_i).
    \end{split}    
  \end{align}
  Also, $x/0$ is interpreted as $0$ for any $x \in \mathbb{R}$.
\end{theorem}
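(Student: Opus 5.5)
We will derive the expansion from the Kallianpur--Striebel representation (Proposition \ref{prop-Kallianpur-striebel}) by applying Theorem \ref{theorem-asymptotic-expansion-1} twice: once with $U_t^\epsilon$ and once with $U_t^\epsilon\equiv 1$. The latter choice trivially satisfies \ref{assumption-A6}, with $U^{[0]}=1$ and all other coefficients zero. This gives
\begin{align*}
  E_{Q^\epsilon}[U_t^\epsilon Z_t^\epsilon|\mathcal{Y}_t^\epsilon]&=N_t^\epsilon+O_P^T(\epsilon^n),\\
  E_{Q^\epsilon}[Z_t^\epsilon|\mathcal{Y}_t^\epsilon]&=D_t^\epsilon+O_P^T(\epsilon^n),
\end{align*}
where $N_t^\epsilon=E_{Q^\epsilon}[U_t^{n-1,\epsilon}Z_t^0|\mathcal{Y}_t^\epsilon]+\sum_{i=1}^{n-1}E_{Q^\epsilon}[U_t^{n-i-1,\epsilon}Z_t^0J_t^i|\mathcal{Y}_t^\epsilon]$ and $D_t^\epsilon=E_{Q^\epsilon}[Z_t^0|\mathcal{Y}_t^\epsilon]+\sum_{i=1}^{n-1}E_{Q^\epsilon}[Z_t^0J_t^i|\mathcal{Y}_t^\epsilon]$. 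Dividing numerator and denominator by $E_{Q^\epsilon}[Z_t^0|\mathcal{Y}_t^\epsilon]$, which is positive by Proposition \ref{prop-lower-bound}, turns $N_t^\epsilon/D_t^\epsilon$ into exactly the first quotient in the statement, written in terms of $\tilde{E}_t^\epsilon$. So the whole task is to show that the quotient of the perturbed quantities equals the quotient of the truncations up to $O_P^T(\epsilon^n)$.

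For this we use the elementary identity
\[
\frac{N+a}{D+b}-\frac{N}{D}=\frac{a}{D+b}-\frac{N}{D}\cdot\frac{b}{D+b},
\]
with $a,b=O_P^T(\epsilon^n)$ and $D+b=E_{Q^\epsilon}[Z_t^\epsilon|\mathcal{Y}_t^\epsilon]$. We need three stochastic bounds, uniform in $t\le T$ and small $\epsilon$:
\begin{enumerate}[label=(\roman*)]
  \item $1/E_{Q^\epsilon}[Z_t^\epsilon|\mathcal{Y}_t^\epsilon]=O_P^T(1)$;
  \item $N_t^\epsilon/D_t^\epsilon=O_P^T(1)$;
  \item $1/(D_t^\epsilon/E_{Q^\epsilon}[Z_t^0|\mathcal{Y}_t^\epsilon])=O_P^T(1)$, which is used in the last step.
\end{enumerate}
The product rule in the Remark after the definition of $O^T_{\nu^\epsilon}$ then closes the argument.

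The main obstacle is (i) and (iii): obtaining tightness of the reciprocal of the denominator uniformly in $\epsilon$. Proposition \ref{prop-lower-bound} gives positivity only pathwise, for each fixed $\epsilon$. For (i), I would first note that $E_{Q^\epsilon}[Z_t^\epsilon|\mathcal{Y}_t^\epsilon]$ is the unnormalized normalizing constant, so its reciprocal is $E_P$-controlled. Since $Q^\epsilon=(Z^\epsilon_T)^{-1}\cdot P$, we have $P(1/E_{Q^\epsilon}[Z_t^\epsilon|\mathcal{Y}_t^\epsilon]>K)=E_{Q^\epsilon}[E_{Q^\epsilon}[Z_t^\epsilon|\mathcal{Y}_t^\epsilon]\,1_{\{\cdots<1/K\}}]\le 1/K$, which is uniform in $t$ and $\epsilon$.

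For the truncated normalizer in (iii), I would use the law identity of Proposition \ref{prop-girsanov-3}. Under $\hat P^\epsilon$ the pair (signal, observation) has an $\epsilon$-independent law, and $E_{Q^\epsilon}[Z_t^0|\mathcal{Y}_t^\epsilon]$ is a fixed functional of $Y^\epsilon$, so its law under $Q^\epsilon$ does not depend on $\epsilon$ (by Proposition \ref{prop-girsanov-2}, $Y^\epsilon$ is a $Q^\epsilon$-Brownian motion up to the time change). Combined with Proposition \ref{prop-lower-bound}, namely $\inf_{t\le T}$ positive a.s., this gives tightness of its reciprocal under $Q^\epsilon$. Lemma \ref{lemma-equivalence-OP-OQ} then transfers it to $P$.

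The remaining ratio $D_t^\epsilon/E_{Q^\epsilon}[Z_t^0|\mathcal{Y}_t^\epsilon]=1+\sum_i\tilde E_t^\epsilon[J_t^i|\mathcal{Y}_t^\epsilon]$ differs from $1$ by $O_P^T(\epsilon)$. Each $J^i$ carries a factor $\epsilon^i$ through $H^{n-1,\epsilon}$, and its $L^1(\hat P^\epsilon)$ norm is bounded by \ref{assumption-A2} and BDG, exactly as for $\hat J$ in the proof of Theorem \ref{theorem-asymptotic-expansion-1}. Hence its reciprocal is $O_P^T(1)$. The same $L^1$ estimates, together with tightness of the reciprocal normalizer, give (ii).

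Finally, the second equality follows from the expansion $1/(1+x)=\sum_{j=0}^{n-1}(-x)^j+(-x)^n/(1+x)$ with $x=\sum_i\tilde E_t^\epsilon[J_t^i|\mathcal{Y}_t^\epsilon]=O_P^T(\epsilon)$. The remainder term is $O_P^T(\epsilon^n)$ by (iii), and multiplying by the $O_P^T(1)$ numerator preserves the order. The convention $x/0=0$ only matters on a null set, by Proposition \ref{prop-lower-bound}.
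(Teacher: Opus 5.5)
Your proposal is correct and is essentially the paper's proof. Both arguments apply Theorem~\ref{theorem-asymptotic-expansion-1} to $U_t^\epsilon$ and to $1$, and both get tightness of $1/E_{Q^\epsilon}[Z_t^0|\mathcal{Y}_t^\epsilon]$ from Proposition~\ref{prop-lower-bound}, the $\epsilon$-invariance of its $Q^\epsilon$-law and Lemma~\ref{lemma-equivalence-OP-OQ}. They then compare the quotients through the same two-term decomposition and finish with the geometric-series remainder.

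There are two local differences. First, your bound $P(E_{Q^\epsilon}[Z_t^\epsilon|\mathcal{Y}_t^\epsilon]<1/K)\le 1/K$ holds because this quantity is $dP/dQ^\epsilon$ on $\mathcal{Y}_t^\epsilon$, and it neatly replaces the paper's perturbative lower bound via \eqref{eq4-22}. Second, Proposition~\ref{prop-lower-bound} does not make $1+\sum_i\tilde E_t^\epsilon[J_t^i|\mathcal{Y}_t^\epsilon]$ nonvanishing almost surely, so the $x/0$ convention is not confined to a null set. Your quotient identities should therefore be applied on $\{|\sum_i\tilde E_t^\epsilon[J_t^i|\mathcal{Y}_t^\epsilon]|\le 1/2\}$, which plays the role of the paper's event $A_t^\epsilon$; its complement has small probability because this sum is $O_P^T(\epsilon)$.
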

\begin{proof}
  Let us define the following expressions:
\begin{align*}
    \Phi_{t,n-1}^\epsilon(U_t^\epsilon) &= E_{Q^\epsilon}[U_{t}^{n-1,\epsilon} Z_t^0|\mathcal{Y}_t^\epsilon] + \sum_{i=1}^{n-1} E_{Q^\epsilon}[U_{t}^{n-i-1,\epsilon} Z_t^0 J_t^i | \mathcal{Y}_t^\epsilon ],
\end{align*}
and
\begin{align*}
    \Psi_{t,n-1}^\epsilon(U_{t}^\epsilon) &= \sum_{i=1}^{n-1} \tilde{E}_t^\epsilon[U_{t}^{n-i-1,\epsilon} J_t^i | \mathcal{Y}_t^\epsilon ].
\end{align*}
Using the definitions of \( H_{t}^{i,\epsilon} \) and \( U_{t,i}^\epsilon \), we can establish
\begin{align}
    \label{eq4-21}
    \Phi_{t,n-1}^\epsilon(1) &= E_{Q^\epsilon}[Z_t^0|\mathcal{Y}_t^\epsilon] + O_P^T(\epsilon),
\end{align}
and from Theorem \ref{theorem-asymptotic-expansion-1},
\begin{align}
    \label{eq4-22}
    E_{Q^\epsilon}[Z_t^\epsilon|\mathcal{Y}_t^\epsilon] &= E_{Q^\epsilon}[Z_t^0|\mathcal{Y}_t^\epsilon] + O_P^T(\epsilon).
\end{align}
Furthermore, Proposition \ref{prop-lower-bound} shows that \( E_{Q^\epsilon}[Z_t^0|\mathcal{Y}_t^\epsilon] \) is almost surely locally bounded away from $0$ for each $\epsilon$. Since the distribution of \( E_{Q^\epsilon}[Z_t^0|\mathcal{Y}_t^\epsilon] \) under \( Q^\epsilon \) is independent of \( \epsilon \), it follows that
\begin{align}
    \label{eq4-23}
    \frac{1}{E_{Q^\epsilon}[Z_t^0|\mathcal{Y}_t^\epsilon]} = O_{Q^\epsilon}^T(1),
\end{align}
and thus \( O_P^T(1) \) by Lemma \ref{lemma-equivalence-OP-OQ}.

  Together with (\ref{eq4-21}) and (\ref{eq4-22}), for any $\delta>0$, there exists a constant $\lambda>0$ and $K>0$ such that
  \begin{align*}
    \sup_{0\leq \epsilon <r}\sup_{0\leq t\leq T}P(\Phi_{t,n-1}^\epsilon(1)> \lambda-K \epsilon,E_{Q^\epsilon}[Z_t^\epsilon|\mathcal{Y}_t^\epsilon]> \lambda-K \epsilon)<\delta.
  \end{align*}
  for some $r>0$. Therefore, let us define
  \begin{align*}
    A_t^\epsilon=\left\{ \Phi_{t,n-1}^\epsilon(1)> \lambda-K \epsilon,E_{Q^\epsilon}[Z_t^\epsilon|\mathcal{Y}_t^\epsilon]> \lambda-K \epsilon \right\}.
  \end{align*}
  Then  Theorem \ref{theorem-asymptotic-expansion-1} gives on $A_t^\epsilon$ and for $\displaystyle \epsilon<\frac{\lambda}{2K}$ that 
  \begin{align*}
    &\Biggl|E[U_t^\epsilon|\mathcal{Y}_t^\epsilon]-\frac{\tilde{E}_t^\epsilon[U_{t}^{n-1,\epsilon} |\mathcal{Y}_t^\epsilon]+\Psi_{t,n-1}^\epsilon(U_{t}^\epsilon) }{1+\Psi_{t,n-1}^\epsilon(1)}\Biggr|\\
    =&\Biggl|\frac{\displaystyle E_{Q^\epsilon}\left[ U_t^\epsilon Z_t^\epsilon\middle|\mathcal{Y}_t^\epsilon \right]}{\displaystyle E_{Q^\epsilon}\left[ Z_t^\epsilon \middle|\mathcal{Y}_t^\epsilon \right]}-\frac{\Phi_{t,{n-1}}^{\epsilon}(U_{t}^\epsilon)}{\Phi_{t,{n-1}}^{\epsilon}(1)}\Biggr|\\
    \leq &\Biggl|\frac{ E_{Q^\epsilon}\left[ U_t^\epsilon Z_t^\epsilon\middle|\mathcal{Y}_t^\epsilon \right]-\Phi_{t,{n-1}}^{\epsilon}(U_{t}^\epsilon)}{E_{Q^\epsilon}\left[ Z_t^\epsilon \middle|\mathcal{Y}_t^\epsilon \right]}\Biggr|\\
    &+\Biggl|\frac{\Phi_{t,{n-1}}^{\epsilon}(U_{t}^\epsilon) \left\{ E_{Q^\epsilon}[Z_t^\epsilon|\mathcal{Y}_t^\epsilon]-\Phi_{t,{n-1}}^{\epsilon}(1) \right\}}{E_{Q^\epsilon}\left[ Z_t^\epsilon \middle|\mathcal{Y}_t^\epsilon \right]\Phi_{t,{n-1}}^{\epsilon}(1)}\Biggr|\\
    \leq &\frac{2\left| E_{Q^\epsilon}\left[ U_t^\epsilon Z_t^\epsilon\middle|\mathcal{Y}_t^\epsilon \right]-\Phi_{t,{n-1}}^{\epsilon}(U_{t}^\epsilon) \right|}{\lambda}\\
    &+\frac{4|\Phi_{t,{n-1}}^{\epsilon}(U_{t}^\epsilon)| \left\{ E_{Q^\epsilon}[Z_t^\epsilon|\mathcal{Y}_t^\epsilon]-\Phi_{t,{n-1}}^{\epsilon}(1) \right\}}{\lambda^2}=O_P^T(\epsilon^n),
  \end{align*}
  which implies the first equality.

  Furthermore, it follows from Theorem \ref{theorem-asymptotic-expansion-1} and (\ref{eq4-21}) that 
\begin{align*} 
  &\Biggl|\frac{\tilde{E}_t^\epsilon[U_{t}^{n-1,\epsilon} |\mathcal{Y}_t^\epsilon]+\Psi_{t,n-1}^\epsilon(U_{t}^\epsilon) }{1+\Psi_{t}^\epsilon(1)}\\\
  &-\left\{ \tilde{E}_t^\epsilon[U_{t}^{n-1,\epsilon} |\mathcal{Y}_t^\epsilon]+\Psi_{t,n-1}^\epsilon(U_{t}^\epsilon) \right\}\left\{ 1-\sum_{j=1}^{n-1}(-1)^j\Psi_{t,n-1}^\epsilon(1)^j \right\}\Biggr|\\
  =&\Biggl|\frac{\tilde{E}_t^\epsilon[U_{t}^{n-1,\epsilon} |\mathcal{Y}_t^\epsilon]+\Psi_{t,n-1}^\epsilon(U_{t}^\epsilon)}{1+\Psi_{t,n-1}^\epsilon(1)}(-1)^n\Psi_{t,n-1}^\epsilon(1)^n\Biggr|\\
  = &\Biggl|\frac{\Phi_{t,{n-1}}^{\epsilon}(U_{t}^\epsilon)}{\Phi_{t,{n-1}}^{\epsilon}(1)}(-1)^n\left( \frac{\Phi_{t,n-1}^\epsilon(U_{t}^\epsilon)-E_{Q^\epsilon}[U_{t}^{n-1,\epsilon} Z_t^0|\mathcal{Y}_t^\epsilon]}{E_{Q^\epsilon}[Z_t^0|\mathcal{Y}_t^\epsilon]} \right)^n\Biggr|\\
    \leq &\frac{2|\Phi_{t,{n-1}}^{\epsilon}(U_{t}^\epsilon)|}{\lambda}\left| \frac{\Phi_{t,n-1}^\epsilon(U_{t}^\epsilon)-E_{Q^\epsilon}[U_{t}^{n-1,\epsilon} Z_t^0|\mathcal{Y}_t^\epsilon]}{E_{Q^\epsilon}[Z_t^0|\mathcal{Y}_t^\epsilon]} \right|^n=O_P^T(\epsilon^n)
\end{align*}
on $A_t^\epsilon$ and for $\displaystyle \epsilon<\frac{\lambda}{2K}$. This leads to the second equality.

\end{proof}

\section{Discussion}
In the final form, our expansion is ultimately reduced to the evaluation of expectations of $U_t^{j,\epsilon}J_t^i$ under $\tilde{E}_t^\epsilon$, where $J_t^i$ is given in (\ref{def-J-i}). Since $\tilde{E}_t^\epsilon$ is associated with the reference Gaussian model (which is obtained by setting $\epsilon$ as 0 in (\ref{eq-Y-epsilon})), the conditional law of $\{X_s\}_{0\leq s\leq t}$ given $\mathcal{Y}_t^\epsilon$ is Gaussian under $\tilde{E}_t^\epsilon$.

Furthermore, in the example of Section \ref{section-example}, $U_t^{[i]}$ and $H_t^{[i]}$ are polynomial functionals of $X$ (more precisely, of $X^{[1]}$ in that section). This means that our expansion is reduced to the computation of polynomial functionals of a Gaussian process, which is why the coefficients are analytically tractable.

In subsequent work, we will provide a recursive procedure for computing these coefficients. The idea is similar in spirit to the derivation of closed systems of ordinary differential equations in \citet{YOSHIDA200353, MASUDA200437}. The key difference, however, is that we directly expand the conditional expectation through the Kallianpur--Striebel formula, rather than relying on an expansion of the finite-dimensional joint distribution. This formulation allows us to obtain a recursive representation for the conditional expectation itself.

Finally, although the present paper focuses only on the expansion of the conditional expectation, the same framework can also be used to derive an expansion of the conditional distribution, in much the same way as in \citet{YOSHIDA200353, MASUDA200437}, through expansions of moments and the characteristic function. This point will also be addressed in a subsequent paper.

As a result, our approach provides a framework in which the filtering expectation and distribution can be approximated through a finite-dimensional recursive structure, in a way analogous to asymptotic expansions of ordinary expectations. This suggests a potentially efficient and tractable alternative to existing filtering methods and expansion-based approaches, particularly in regimes where perturbative structures can be exploited.

\appendix
\section{Estimates for $E_{Q^\epsilon}[Z_t^0|\mathcal{Y}_t^\epsilon]$}\label{section-appendix}
In this section, we derive an explicit formula for $E_{Q^\epsilon}[Z_t^0|\mathcal{Y}_t^\epsilon]$ ($Z_0$ is given in (\ref{def-Z0})) as a functional of $Y^\epsilon$. To this end, we define $\hat{Y}_t$ by
  \begin{align*}
    \hat{Y}_s^\epsilon=\int_0^s c(u)^\top(\sigma(u)\sigma(u)^\top)^{-1}dY_u^\epsilon,
  \end{align*}
and rewrite $Z_t^0$ as
\begin{align*}
  Z_t^0=&\exp\left( \int_0^tX_s^\top c(s)^\top(\sigma(s)\sigma(s)^\top)^{-1} dY_s^\epsilon\right.\nonumber\\
  &\left.-\frac{1}{2}\int_0^tX_s^\top c(s)^\top(\sigma(s)\sigma(s)^\top)^{-1} c(s)X_s ds\right)\nonumber\\
    =&\exp\left( X_t^\top \hat{Y}_t^\epsilon- \int_0^t(\hat{Y}_s^\epsilon)^\top dX_s\right.\\
  &\left.-\frac{1}{2}\int_0^tX_s^\top c(s)^\top(\sigma(s)\sigma(s)^\top)^{-1} c(s)X_s ds\right).
\end{align*}
Furthermore, let us write
\begin{align*}
    z_t(y)=&\exp\left( X_t^\top y(t)- \int_0^ty(s)^\top dX_s\right.\\
  &\left.-\frac{1}{2}\int_0^tX_s^\top c(s)^\top(\sigma(s)\sigma(s)^\top)^{-1} c(s)X_s ds\right)
\end{align*}
for a continuous function $y:[0,t]\to \mathbb{R}^{d_1}$. Then we can write
\begin{align*}
  E_{Q^\epsilon}[Z_t^0 \mid \mathcal{Y}_t^\epsilon]
= E[z_t(y)]\big|_{y=\hat{Y}_{\cdot}^\epsilon},
\end{align*}
where $\hat{Y}_{\cdot}^\epsilon$ denotes the path of $\{\hat{Y}_s^\epsilon\}_{0\le s\le t}$ (see Lemma A.1 in \citet{kurisaki2026pathwise} for a precise justification). Therefore, it suffices to compute the expectation $E[z_t(y)]$ for arbitrary $y$.

For this purpose, write $S(t)=c(t)^\top(\sigma(t)\sigma(t)^\top)^{-1} c(t)$, and introduce an \( M_{d_1}(\mathbb{R}) \)-valued differentiable function \( \phi(s; t) \) as the negative semidefinite symmetric solution of the matrix Riccati equation
\begin{align}
  \label{eq-def-phi}
  \begin{split}
    \frac{d\phi}{ds}(s; t) &= -\phi(s; t) b(s) b(s)^\top \phi(s; t) - a(s)^\top \phi(s; t) - \phi(s; t) a(s)  + S(s)
  \end{split}
\end{align}
with the boundary condition \( \phi(t; t) = 0 \). This solution is well-defined. Specifically, the Riccati equation
\begin{align}
  \begin{split}
    \frac{d\tilde{\phi}}{ds}(s; t) &= -\tilde{\phi}(s; t) b(t - s) b(t - s)^\top \tilde{\phi}(s; t) \\
  &\quad + a(t - s)^\top \tilde{\phi}(s; t) + \tilde{\phi}(s; t) a(t - s) + S(t-s)\label{eq-tilde-phi}
  \end{split}  
\end{align}
with \( \tilde{\phi}(0; t) = 0 \) has a positive-semidefinite solution \( \tilde{\phi}(s; t) \) for \( 0 \leq s \leq t \), according to Theorems 2.1 and 2.2 in \citet{potter1965matrix}. It then follows immediately that \( \phi(s; t) = -\tilde{\phi}(t - s; t) \) satisfies (\ref{eq-def-phi}), and uniqueness can be established using Gronwall's lemma.

Also, let \( \{\tilde{V}_s\} \) be a \( d_1 \)-dimensional Brownian motion on \( (\Omega, \mathcal{F}, \{\mathcal{F}_t\}, P) \) that is independent of \( \{Y_s\} \) and \( X_0 \). Let \( \{\xi_{s; t}^0\}_{0 \leq s \leq t} \) be the solution to the stochastic differential equation
\begin{align*}
  d_s\xi_{s; t} = \left\{ a(s) + b(s) b(s)^\top \phi(s; t) \right\} \xi_{s; t} \, ds + b(s) \, d\tilde{V}_s,~~\xi_{0;t}=X_0.
\end{align*}

\begin{lemma}\label{lemma-measure-change}
  Let $X.$ and $\xi.0$ be the paths of $\{X_s\}_{0\leq s\leq t}$ and $\{\xi_{s,t}\}_{0\leq s\leq t}$, and $\mu_X$ and $\mu_{\xi}$ be their distributions on $P$. 
  \begin{align*}     
    \frac{d\mu_{\xi}}{d\mu_X}(X.)=\exp\biggl( &-\frac{1}{2}\int_0^t X_s^\top S(s) X_sds\\
    &-\frac{1}{2}\int_0^t b(s)^\top \phi(s;t)b(s)ds-\frac{1}{2}X_0^\top \phi(0;t) X_0\biggr).
  \end{align*}
\end{lemma}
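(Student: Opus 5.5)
The plan is to obtain the density from Girsanov's theorem and then rewrite the stochastic exponent as a pathwise functional of $X$, using It\^o's formula together with the Riccati equation (\ref{eq-def-phi}).

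\textbf{Step 1 (Girsanov).} The processes $X$ and $\xi^0_{\cdot;t}$ have the same initial law (that of $X_0$) and the same diffusion coefficient $b(s)$. Their drifts differ by $b(s)b(s)^\top\phi(s;t)X_s$, which lies in the range of $b(s)$. So I set $u_s=b(s)^\top\phi(s;t)X_s$ and consider
\begin{align*}
  \mathcal{E}_t=\exp\left(\int_0^t u_s^\top dV_s-\frac12\int_0^t |u_s|^2ds\right).
\end{align*}
If $\mathcal{E}$ is a true martingale, then under $d\tilde P=\mathcal{E}_t\,dP$ the process $X$ solves the equation defining $\xi^0_{\cdot;t}$, with a new Brownian motion. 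Weak uniqueness for this linear SDE then gives that the law of $X_\cdot$ under $\tilde P$ is $\mu_\xi$. Hence $d\mu_\xi/d\mu_X(X_\cdot)=E[\mathcal{E}_t\mid X_\cdot]$, and Step 2 shows that $\mathcal{E}_t$ is already a functional of the path $X_\cdot$.

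\textbf{Step 2 (pathwise rewriting).} Since $b\,dV_s=dX_s-a(s)X_s\,ds$, we have
\begin{align*}
\int_0^t u_s^\top dV_s=\int_0^t X_s^\top\phi(s;t)\,dX_s-\int_0^t X_s^\top\phi(s;t) a(s)X_s\,ds.
\end{align*}
Applying It\^o's formula to $X_s^\top\phi(s;t)X_s$, using that $\phi$ is symmetric and $\phi(t;t)=0$, gives
\begin{align*}
\int_0^t X_s^\top\phi\, dX_s=\frac12\Bigl(-X_0^\top\phi(0;t)X_0-\int_0^t X_s^\top \partial_s\phi\, X_s\,ds-\int_0^t \mathrm{tr}\bigl(b^\top\phi b\bigr)ds\Bigr).
\end{align*}
Next I substitute $\partial_s\phi=-\phi bb^\top\phi-a^\top\phi-\phi a+S$. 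In the resulting exponent the term $\frac12\int X^\top\phi bb^\top\phi X$ cancels against $-\frac12\int|u|^2$, and the terms $\frac12\int X^\top(a^\top\phi+\phi a)X$ cancel against $-\int X^\top\phi aX$. What remains is exactly $-\frac12\int_0^t X_s^\top S(s)X_s\,ds-\frac12\int_0^t b^\top\phi b\,ds-\frac12 X_0^\top\phi(0;t)X_0$. Here the middle term is read as a trace, consistent with the statement. This is a measurable functional of $X_\cdot$, which yields the claimed formula.

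\textbf{Main obstacle.} The hard part is justifying that $\mathcal{E}$ is a true martingale. The drift $u$ grows linearly in $X$ and $a$ is only integrable, so Novikov's condition on all of $[0,t]$ may fail. I would first note that $\phi(\cdot;t)$ is continuous, hence bounded, on $[0,t]$, and that $b\in L^2$. I would then split $[0,t]$ into finitely many subintervals on which $\int|b|^2$ is small. On each subintervall, the Gaussian moment bound $E[\exp(\lambda\sup_s|X_s|^2)]<\infty$ for small $\lambda$, conditionally on the left endpoint, gives Novikov locally. Concatenating these pieces (the standard Bene\v{s}/linear-growth argument) shows $E[\mathcal{E}_t]=1$. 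As an alternative, I could localize with stopping times and conclude by uniqueness of the linear SDE, which rules out explosion.
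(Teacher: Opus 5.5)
Your proof is correct. Step~2 is the paper's own argument: It\^o's formula for $X_s^\top\phi(s;t)X_s$, the Riccati equation (\ref{eq-def-phi}) and $\phi(t;t)=0$, with the same cancellations and the same remaining terms.

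The difference is in Step~1. The paper does not carry out a change of measure. It quotes (7.138) of Liptser--Shiryaev for the equivalence $\mu_\xi\sim\mu_X$ and for the density $\exp\bigl(\int_0^t X_s^\top\phi\,dX_s-\frac12\int_0^t X_s^\top\phi(2a+bb^\top\phi)X_s\,ds\bigr)$. That exponent is exactly your $\int_0^t u_s^\top dV_s-\frac12\int_0^t|u_s|^2ds$ once you substitute $b\,dV_s=dX_s-a(s)X_s\,ds$.

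You instead derive this formula yourself:
\begin{itemize}
\item Girsanov's theorem with $u_s=b(s)^\top\phi(s;t)X_s$;
\item identification of the law of $X$ under $\tilde P$ by uniqueness for the linear SDE;
\item the identity $d\mu_\xi/d\mu_X(X_\cdot)=E[\mathcal{E}_t\mid X_\cdot]=\mathcal{E}_t$.
\end{itemize}
The paper's route is shorter because it delegates the analytic content to the citation. Yours is self-contained and makes explicit the one nontrivial input, namely that $\mathcal{E}$ is a true martingale. Because you write the drift difference explicitly as $b(s)u_s$, your argument also does not care whether $b(s)b(s)^\top$ is invertible. That matters here, since $b(s)b(s)^\top$ is degenerate in the example of Section~\ref{section-example}.

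Your martingale argument works, and it can be simplified. Since $\phi(\cdot;t)$ is bounded on $[0,t]$ (Lemma~\ref{lemma-boundedness-phi}) and $X$ is a continuous Gaussian process, Fernique's theorem gives $E[\exp(\lambda\sup_{s\le t}|X_s|^2)]<\infty$ for some $\lambda>0$. Novikov's condition therefore holds on every subinterval with $\frac12\|\phi\|_\infty^2\int_{t_i}^{t_{i+1}}|b(s)|^2ds\le\lambda$, with no conditioning on left endpoints. The piecewise form of Novikov's criterion then yields $E[\mathcal{E}_t]=1$.

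Two details should be stated explicitly:
\begin{itemize}
\item $E[\mathcal{E}_t\mid\mathcal{F}_0]=1$ ensures that $X_0$ keeps its law under $\tilde P$, which your weak-uniqueness identification needs.
\item $\mathcal{E}_t>0$ gives $\mu_X\ll\mu_\xi$ as well, so you get full equivalence.
\end{itemize}
Reading $\int_0^t b(s)^\top\phi(s;t)b(s)\,ds$ as a trace is the intended meaning; the paper's display simply omits the trace.
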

\begin{proof}
  Due to (7.138) in \citet{Liptser2001}, $\mu_X$ and $\mu_{\xi}$ are equivalent, and it holds that 
  \begin{align}
    \label{eq3-3}\begin{split}      
    \frac{d\mu_{\xi}}{d\mu_X}(X.)=\exp\biggl(&\int_0^t X_{s}^\top\phi(s;t) dX_s\\
    &-\frac{1}{2}\int_0^tX_s^\top \phi(s;t) (2a(s)+b(s)b(s)^\top \phi(s;t))X_s ds \biggr).
    \end{split}
  \end{align}
  On the other hand, by It\^o's formula and (\ref{eq-def-phi}), we have
  \begin{align*}
    &X_t^\top \phi(t;t) X_t-X_0^\top \phi(0;t) X_0\\
    =&2\int_0^t X_s^\top\phi(s;t)dX_s+\int_0^t X_s^\top \frac{\partial}{\partial s}\phi(s;t) X_sds + \int_0^t b(s)^\top \phi(s;t)b(s)ds\\
    =&2\int_0^t X_s^\top\phi(s;t)dX_s-\int_0^t X_s^\top \phi(s;t)b(s)b(s)^\top \phi(s;t) X_sds \\
    &-\int_0^t X_s^\top a(s)^\top \phi(s;t) X_sds-\int_0^t X_s^\top  \phi(s;t)a(s) X_sds\\
    &+\int_0^t X_s^\top S(s) X_sds
    + \int_0^t b(s)^\top \phi(s;t)b(s)ds\\
    =&2\int_0^t X_s^\top\phi(s;t)dX_s-\int_0^t X_s^\top \phi(s;t)b(s)b(s)^\top \phi(s;t) X_sds \\
    &-2\int_0^t X_s^\top  \phi(s;t)a(s) X_sds\\
    &+\int_0^t X_s^\top S(s) X_sds
    + \int_0^t b(s)^\top \phi(s;t)b(s)ds.
  \end{align*}
  Therefore, noting that $\phi(t;t)=0$, it holds
  \begin{align*}
    &\int_0^t X_s^\top\phi(s;t)dX_s\\
    =&\frac{1}{2}\int_0^t X_s^\top \phi(s;t)b(s)b(s)^\top \phi(s;t) X_sds+\int_0^t X_s^\top  \phi(s;t)a(s) X_sds\\
    &-\frac{1}{2}\int_0^t X_s^\top S(s) X_sds\\
    &-\frac{1}{2}\int_0^t b(s)^\top \phi(s;t)b(s)ds-\frac{1}{2}X_0^\top \phi(0;t) X_0.
  \end{align*}  
  Together with (\ref{eq3-3}), we obtain the desired result.  
\end{proof}

\begin{lemma}\label{lemma-boundedness-phi}
  The function $\phi(s;t)$ is locally bounded with respect to $(s,t)$.
\end{lemma}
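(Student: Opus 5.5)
We work with $\tilde{\phi}(s;t)=-\phi(t-s;t)$, the positive-semidefinite solution of (\ref{eq-tilde-phi}) on $[0,t]$ with $\tilde{\phi}(0;t)=0$. It suffices to show that for every $T>0$ there is a constant $C_T$ with $0\le \tilde{\phi}(s;t)\le C_T I$ for all $0\le s\le t\le T$ in the Loewner order. Boundedness of the Frobenius norm then follows, because $|\tilde\phi|\le \sqrt{d_1}\,\lambda_{\max}(\tilde\phi)$.

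The plan is a comparison argument. The lower bound $\tilde{\phi}\ge 0$ is already supplied by the cited results of \citet{potter1965matrix}. For the upper bound, we drop the quadratic term. Since $-\tilde\phi bb^\top\tilde\phi\le 0$, $\tilde\phi$ is a subsolution of the linear Lyapunov equation
\begin{align*}
  \frac{d\psi}{ds}=a(t-s)^\top\psi+\psi a(t-s)+S(t-s),\qquad \psi(0;t)=0 .
\end{align*}
Let $\Gamma(s,r)$ denote the fundamental matrix of $\dot\gamma=a(t-s)^\top\gamma$. This is well defined under (\ref{eq-assumption-a}), since $a$ is only locally integrable and all equations are understood in the absolutely continuous (Carathéodory) sense. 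Variation of constants gives
\begin{align*}
  \tilde\phi(s;t)=\int_0^s\Gamma(s,r)\bigl\{S(t-r)-\tilde\phi(r;t) b(t-r)b(t-r)^\top\tilde\phi(r;t)\bigr\}\Gamma(s,r)^\top dr .
\end{align*}
The subtracted integrand is positive semidefinite, so
\begin{align*}
  0\le\tilde\phi(s;t)\le\int_0^s\Gamma(s,r)S(t-r)\Gamma(s,r)^\top dr .
\end{align*}

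Next we bound the right-hand side uniformly in $t$. Gronwall's lemma gives $|\Gamma(s,r)|\le \exp\bigl(\int_0^T|a(u)|du\bigr)$ for all $0\le r\le s\le t\le T$. Assumptions (\ref{eq-assumption-c}) and (\ref{eq-assumption-sigma-2}) give $|S(u)|\le C^{-1}|c(u)|^2$, so that $\int_0^T|S(u)|du<\infty$. Combining the two bounds,
\begin{align*}
  |\tilde\phi(s;t)|\le C_T:=\sqrt{d_1}\,e^{2\int_0^T|a|}\,C^{-1}\int_0^T|c(u)|^2du .
\end{align*}
This bound is independent of $(s,t)$ in $\{0\le s\le t\le T\}$, and transferring it back to $\phi(s;t)=-\tilde\phi(t-s;t)$ gives local boundedness.

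The main obstacle is justifying the variation-of-constants identity, together with the sign argument, for the nonlinear Riccati equation when the coefficients are merely integrable. The way around it is to treat $Q(r):=S-\tilde\phi bb^\top\tilde\phi$ as a given integrable forcing term. It is integrable because $\tilde\phi$ is continuous on $[0,t]$ and $b\in L^2$. With $Q$ fixed, the equation for $\tilde\phi$ is linear, and the standard Duhamel formula applies to absolutely continuous solutions; one verifies it by differentiating $\Gamma(s,0)^{-1}\tilde\phi(s)\Gamma(s,0)^{-\top}$. No global-existence issue arises, because Potter's theorem already gives existence on all of $[0,t]$. The a priori bound above is what rules out blow-up uniformly in $t$.
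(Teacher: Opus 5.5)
Your proof is correct. It rests on the same key observation as the paper's proof: the term $-\tilde\phi bb^\top\tilde\phi$ is negative semidefinite and can be dropped, which leaves a linear problem controlled by $\int_0^T|a|$ and $\int_0^T|S|\le C^{-1}\int_0^T|c|^2$.

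Where you differ is in how the linear problem is handled.
\begin{itemize}
\item \textbf{The paper's route.} It works with the scalar quadratic form $x^\top\tilde\phi(s;t)x$ and bounds the drift term crudely by $2\|a(t-s)\|_2\|\tilde\phi(s;t)\|_2|x|^2$. It then integrates and takes the supremum over unit $x$; this step silently uses $\tilde\phi\ge 0$, so that the supremum equals $\|\tilde\phi\|_2$. It closes with a scalar Gronwall inequality for $s\mapsto\|\tilde\phi(s;t)\|_2$. No fundamental matrix is needed, and the argument takes a few lines.
\item \textbf{Your route.} You keep the linear drift exactly through the fundamental matrix and the Duhamel formula. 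This gives the Loewner-order sandwich $0\le\tilde\phi(s;t)\le\int_0^s\Gamma(s,r)S(t-r)\Gamma(s,r)^\top dr$, so $\tilde\phi$ is dominated by the solution of the associated Lyapunov equation. Gronwall is applied to $\Gamma$ rather than to $\tilde\phi$.
\end{itemize}

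Your version buys a more structural comparison and an explicit constant. Freezing $S-\tilde\phi bb^\top\tilde\phi$ as an integrable forcing term also makes the Carath\'eodory setting explicit, which the paper glosses over. The cost is a little more machinery.

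Your method can even recover the lower bound without Potter. Write the equation as $\dot{\tilde\phi}=\tilde A^\top\tilde\phi+\tilde\phi\tilde A+S$ with $\tilde A=a-\tfrac12 bb^\top\tilde\phi$. The same Duhamel formula then gives $\tilde\phi\ge 0$ directly. Together with your upper bound, this also yields an a priori estimate that rules out blow-up.

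One cosmetic point: the bound $|\Gamma(s,r)|\le\exp(\int_0^T|a|)$ holds for the operator norm, not the Frobenius norm. For $a=0$ and $d_1\ge 2$ one gets $|I|=\sqrt{d_1}$. Read $|\Gamma|$ as $\|\Gamma\|_2$ and your final constant is correct as stated.
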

\begin{proof}
  Let us consider $\tilde{\phi}$ defined in (\ref{eq-tilde-phi}). Then it holds for $x \in \mathbb{R}^{d_1}$
  \begin{align*}
    \frac{d}{ds}x^\top \tilde{\phi}(s;t) x\leq  &2x^\top a(t - s)^\top \tilde{\phi}(s; t)x +x^\top S(t-s) x\\
    \leq &2\|a(t-s)\|_2\|\tilde{\phi}(s;t)\|_2|x|^2+\|S(t-s)\|_2|x|^2,
  \end{align*}
  where $\|\cdot\|_2$ is the matrix 2-norm. Thus it follows that 
  \begin{align*}
    \|\tilde{\phi}(s;t)\|_2\leq \int_0^s \left\{ 2\|a(t-u)\|_2\|\tilde{\phi}(u;t)\|_2+\|S(t-u)\|_2 \right\}du.
  \end{align*}
  Hence Gronwall's lemma yields
  \begin{align*}
    \|\tilde{\phi}(s;t)\|_2\leq\int_0^s\exp\left( 2\int_s^t \|a(t-r)\|_2dr \right)\|S(t-u)\|_2du,
  \end{align*}
  and the desired result follows from Assumption \ref{assumption-A0}.  
\end{proof}

\begin{lemma}\label{prop-expectation-zeta}
  We have the expression
  \begin{align*}
      &E_Q[z_t(y)]\\
    =&\sqrt{\det(I_{d_1}-\Sigma^\frac{1}{2}\phi(0;t)\Sigma^\frac{1}{2})}\exp\left( \frac{1}{2}\int_0^t b(s)^\top \phi(s;t)b(s)ds \right)\\
  &\times \exp\biggl(\frac{1}{2}\beta(t;y)^\top \Sigma^\frac{1}{2}(I_{d_1}-\Sigma^\frac{1}{2}\phi(0;t)\Sigma^\frac{1}{2})^{-1}\Sigma^\frac{1}{2}\beta(t;y) \biggr)\\
  &\times \exp\biggl(\mu^\top \alpha(0,t;t)^\top y(t)\\
  &- \int_0^ty(s)^\top \left\{ a(s)+b(s)b(s)^\top \phi(s;t) \right\}\alpha(0,s;t)ds \mu+\frac{1}{2}\mu^\top \phi(0;t) \mu\biggr)\\
  &\times \exp\biggl( \frac{1}{2}y(t)^\top \gamma(t)y(t)
  +\frac{1}{2}\int_0^t y(s)^\top b(s)b(s)^\top y(s)ds\\
  &-\int_0^t y(s) b(s)b(s)^\top y(t) ds  \biggr),
  \end{align*}
  where
  \begin{align*}
    &\mu=E[X_0],~~\Sigma=\mathrm{Cov}(X_0,X_0),\\
    &\alpha(u,s;t)=\exp\left( \int_u^s \{a(r)+b(r)b(r)^\top \phi(r;t)\}dr \right),\\
    &\beta(t;y)= \alpha(0,t;t)^\top y(t)- \int_0^t \alpha(0,s;t)^\top \left\{ a(s)+b(s)b(s)^\top \phi(s;t) \right\}^\top y(s) ds\\
    &\qquad \qquad+ \phi(0;t)\mu,\\
    &\gamma(t)=\int_0^t \alpha(s,t;t)b(s)b(s)^\top \alpha(s,t;t)^\top ds.
  \end{align*}
\end{lemma}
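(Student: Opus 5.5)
We compute $E[z_t(y)]$ by changing measure from the law of $X$ to the law of the auxiliary Gaussian process $\xi_{\cdot;t}$ via Lemma \ref{lemma-measure-change}. This removes the quadratic term $-\frac12\int_0^t X_s^\top S(s)X_s\,ds$ from the exponent. What is left is the expectation of the exponential of a linear functional of a Gaussian process, which we evaluate in closed form.

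\textbf{Step 1 (change of measure).} Lemma \ref{lemma-measure-change} gives
\begin{align*}
\exp\left(-\frac12\int_0^t X_s^\top S(s)X_s\,ds\right)
=\frac{d\mu_\xi}{d\mu_X}(X_\cdot)\exp\left(\frac12\int_0^t b(s)^\top\phi(s;t)b(s)\,ds+\frac12 X_0^\top\phi(0;t)X_0\right).
\end{align*}
The functional $X_t^\top y(t)-\int_0^t y(s)^\top dX_s$ is defined pathwise through the integration by parts used in the appendix, because $y$ is a fixed continuous function. Therefore
\begin{align*}
E[z_t(y)]=e^{\frac12\int_0^t b^\top\phi b\,ds}\,E\left[\exp\left(\xi_{t;t}^\top y(t)-\int_0^t y(s)^\top d\xi_{s;t}+\frac12\xi_{0;t}^\top\phi(0;t)\xi_{0;t}\right)\right].
\end{align*}
This accounts for the factor $\exp\left(\frac12\int_0^t b^\top\phi b\,ds\right)$ in the statement.

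\textbf{Step 2 (condition on $X_0$).} Write $\xi_{s;t}=\alpha(0,s;t)X_0+\int_0^s\alpha(u,s;t)b(u)\,d\tilde V_u$ and substitute $d\xi_{s;t}=\{a+bb^\top\phi\}\xi_{s;t}\,ds+b\,d\tilde V_s$. The exponent then splits into two parts. The first is linear in $X_0$ with coefficient $\beta(t;y)-\phi(0;t)\mu$. The second is a Gaussian functional of $\tilde V$, namely
\[
y(t)^\top\int_0^t\alpha(u,t;t)b\,d\tilde V_u-\int_0^t y(s)^\top\{a+bb^\top\phi\}\Big(\int_0^s\alpha b\,d\tilde V\Big)ds-\int_0^t y^\top b\,d\tilde V .
\]
Applying stochastic Fubini, this becomes $\int_0^t g(u)^\top d\tilde V_u$ for a deterministic $g$. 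Its exponential moment is $\exp\left(\frac12\int_0^t|g|^2\,du\right)$, which we expand and simplify using $\partial_u\alpha(u,s;t)=-\alpha(u,s;t)\{a+bb^\top\phi\}$. The result should be the quadratic form in $y$ consisting of $\frac12 y(t)^\top\gamma(t)y(t)$, $\frac12\int_0^t y^\top bb^\top y\,ds$, and the cross term in the statement.

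\textbf{Step 3 ($X_0$ integration).} Write $X_0=\mu+\Sigma^{1/2}N$ with $N$ standard normal. It remains to compute $E\left[\exp\left(\ell^\top X_0+\frac12X_0^\top\phi(0;t)X_0\right)\right]$. Since $\phi(0;t)\le0$, the matrix $I-\Sigma^{1/2}\phi(0;t)\Sigma^{1/2}$ is positive definite, so the standard Gaussian integral is finite. Completing the square produces the determinant factor, the exponential of $\frac12\beta^\top\Sigma^{1/2}(I-\Sigma^{1/2}\phi\Sigma^{1/2})^{-1}\Sigma^{1/2}\beta$, and the $\mu$-dependent terms $\mu^\top\alpha(0,t;t)^\top y(t)-\int_0^t y^\top\{\cdots\}\alpha\,ds\,\mu+\frac12\mu^\top\phi(0;t)\mu$. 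The computation yields $\det(\cdot)^{-1/2}$; the exponent $+\frac12$ in the statement looks like a typo, and I would follow the computation.

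\textbf{Main obstacle.} I expect the main work to be Step 2. One has to justify replacing the stochastic integral against $X$ by a pathwise expression valid under the change of measure; the integration-by-parts form of $z_t(y)$ handles this. One also has to carry out the Fubini rearrangement of $\int_0^t|g|^2\,du$ carefully so that the terms regroup exactly into $\gamma(t)$ and the cross term. Lemma \ref{lemma-boundedness-phi} guarantees that $\alpha$, $\phi$ and $\gamma$ are finite, so all the integrals involved are well defined.
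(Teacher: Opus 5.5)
Your route is the paper's: Lemma \ref{lemma-measure-change} to pass from $X$ to $\xi_{\cdot;t}$, the split $\xi_{s;t}=\alpha(0,s;t)X_0+\overline{\xi}_{s;t}$, and two independent Gaussian exponential moments. Steps 1 and 3 are fine. You are right that completing the square gives $\det(I_{d_1}-\Sigma^{1/2}\phi(0;t)\Sigma^{1/2})^{-1/2}$, not the square root in the statement.

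\textbf{The gap is in Step 2.} You assert, without checking, that $\frac12\int_0^t|g|^2du$ regroups into $\frac12y(t)^\top\gamma(t)y(t)+\frac12\int_0^ty^\top bb^\top y\,ds-\int_0^ty^\top bb^\top y(t)\,ds$. It does not. Write $B(s)=a(s)+b(s)b(s)^\top\phi(s;t)$. Stochastic Fubini gives
\[
g(u)^\top=\Bigl(y(t)^\top\alpha(u,t;t)-\int_u^t y(s)^\top B(s)\alpha(u,s;t)\,ds-y(u)^\top\Bigr)b(u).
\]
So $\int_0^t|g|^2du$ contains, besides $y(t)^\top\gamma(t)y(t)$ and $\int_0^ty^\top bb^\top y\,ds$, two further pieces. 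The cross term is $-2\int_0^t y(u)^\top b(u)b(u)^\top\alpha(u,t;t)^\top y(t)\,du$, with $\alpha$ rather than $I$. There are also all the terms involving $\int_u^t y^\top B\alpha\,ds$. These extra contributions disappear when $B\equiv0$ (then $\alpha\equiv I$), but not in general. The paper's (\ref{eq3-9}) makes the same slip: it computes the variance as if $\int_0^t y^\top d\overline{\xi}_{s;t}$ were only $\int_0^t y^\top b\,d\tilde V$, dropping the drift part $\int_0^t y^\top B\,\overline{\xi}_{s;t}\,ds$.

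\textbf{The printed formula is false, so no completion of Step 2 can reach it.} Test with constant $y\equiv y_0$. Then $\overline{\xi}_{t;t}^\top y_0-\int_0^t y_0^\top d\overline{\xi}_{s;t}=y_0^\top\overline{\xi}_{0;t}=0$. Equivalently $g\equiv0$, since $\int_u^tB(s)\alpha(u,s;t)\,ds=\alpha(u,t;t)-I$. So the Gaussian factor must equal $1$, while the statement gives $\exp\bigl(\frac12y_0^\top(\gamma(t)-\int_0^tb(s)b(s)^\top ds)y_0\bigr)$. In the scalar case $a=0$, $b=c=\sigma=1$ one has $\phi(s;t)=-\tanh(t-s)$, $\alpha(s,t;t)=1/\cosh(t-s)$, and $\gamma(t)=\tanh t\neq t$.

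The correct conclusion of your Step 2 is $\exp\bigl(\frac12\int_0^t|g(u)|^2du\bigr)$ with $g$ as above. For $y$ of bounded variation this is $g(u)=b(u)^\top\int_u^t\alpha(u,s;t)^\top dy(s)$. You should state the lemma in that corrected form rather than claim the printed Gaussian factor.

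One further point: your identity $\partial_u\alpha(u,s;t)=-\alpha(u,s;t)B(u)$, and the decomposition of $\xi$ itself, require $\alpha$ to be the transition matrix of $B$. The literal $\exp(\int_u^sB(r)\,dr)$ qualifies only when the matrices $B(r)$ commute.
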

\begin{proof}
  Due to Lemma \ref{lemma-measure-change}, it follows that
  \begin{align*}
    z_t(y)=&\exp\left( X_t^\top y(t)-\int_0^t y(s)^\top dX_s\right.\\
    &\left.+\frac{1}{2}\int_0^t b(s)^\top \phi(s;t)b(s)ds+\frac{1}{2}X_0^\top \phi(0;t) X_0 \right)\frac{d\mu_{\xi}}{d\mu_X}(X.)
  \end{align*}
  and
  \begin{align}
    \label{eq-z-y-1}\begin{split}
      E_Q[z_t(y)]=E\biggl[ \exp\biggl(&{\xi}_{t;t}^\top y(t)- \int_0^ty(s)^\top d{\xi}_{s;t}\\
     &+\frac{1}{2}\left. \left.\int_0^t b(s)^\top \phi(s;t)b(s)ds+\frac{1}{2}X_0^\top \phi(0;t) X_0\right) \right].
    \end{split}    
  \end{align}
  Furthermore, if we define $\{\overline{\xi}_{s}\}_{0\leq s\leq t}$ by
  \begin{align*}
    d{\overline{\xi}}_{s;t}=\left\{ a(s)+b(s)b(s)^\top \phi(s;t) \right\}{\overline{\xi}}_{s;t}ds+b(s)d\tilde{V}_s,~~{\overline{\xi}}_{0;t}=0,
  \end{align*}
  then we can write
  \begin{align*}
    \xi_{s,t}=\alpha(0,s;t)X_0+\overline{\xi}_{s,t}.
  \end{align*}
  Substituting this expression into (\ref{eq-z-y-1}), we obtain
  \begin{align}
    &E_Q[z_t(y)]\nonumber \\
     \label{eq3-7}\begin{split}
      =&\exp\left( \frac{1}{2}\int_0^t b(s)^\top \phi(s;t)b(s)ds \right)\\
     &\times E\biggl[ \exp\biggl(X_0^\top \alpha(0,t;t)^\top y(t)\\
     &\left. \left.- \int_0^ty(s)^\top \left\{ a(s)+b(s)b(s)^\top \phi(s;t) \right\}\alpha(0,s;t)ds X_0+\frac{1}{2}X_0^\top \phi(0;t) X_0\right) \right]\\
     &\times E\left[ \exp\left(\overline{\xi}_{t;t}^\top y(t)- \int_0^ty(s)^\top d\overline{\xi}_{s;t}\right) \right].
     \end{split}     
  \end{align}

  For the first expectation on the right-hand side, a straightforward calculation yields
  \begin{align}
    &E\biggl[ \exp\biggl(X_0^\top \alpha(0,t;t)^\top y(t)\nonumber\\
     &\left. \left.- \int_0^ty(s)^\top \left\{ a(s)+b(s)b(s)^\top \phi(s;t) \right\}\alpha(0,s;t)ds X_0+\frac{1}{2}X_0^\top \phi(0;t) X_0\right) \right]\nonumber\\
     \label{eq3-8}\begin{split}
      =&\sqrt{\det|I_{d_1}-\Sigma^\frac{1}{2}\phi(0;t)\Sigma^\frac{1}{2}|}\exp\biggl(\frac{1}{2}\beta(t;y)^\top \Sigma^\frac{1}{2}(I_{d_1}-\Sigma^\frac{1}{2}\phi(0;t)\Sigma^\frac{1}{2})^{-1}\Sigma^\frac{1}{2}\beta(t;y) \biggr)\\
     &\times \exp\biggl(\mu^\top \alpha(0,t;t)^\top y(t)\\
     &- \int_0^ty(s)^\top \left\{ a(s)+b(s)b(s)^\top \phi(s;t) \right\}\alpha(0,s;t)ds \mu+\frac{1}{2}\mu^\top \phi(0;t) \mu\biggr).
     \end{split}     
  \end{align}  
  Here, we used the negative semi-definiteness of $\phi(0;t)$.

  For the second expectation, since $\displaystyle \overline{\xi}_{t;t}^\top y(t)- \int_0^ty(s)^\top d\overline{\xi}_{s;t}$ is a Gaussian random variable with mean 0, it follows that 
  \begin{align}
    &E\left[ \exp\left(\overline{\xi}_{t;t}^\top y(t)- \int_0^ty(s)^\top d\overline{\xi}_{s;t}\right) \right]\nonumber\\
    =&\exp\left( \frac{1}{2}E\left[ \left( \overline{\xi}_{t;t}^\top y(t)- \int_0^ty(s)^\top d\overline{\xi}_{s;t} \right)^2 \right] \right)\nonumber\\
    \label{eq3-9}\begin{split}      
    =&\exp\biggl( \frac{1}{2}y(t)^\top E[\overline{\xi}_{t;t}\overline{\xi}_{t;t}^\top]y(t)
    +\frac{1}{2}\int_0^t y(s)^\top b(s)b(s)^\top y(s)ds\\
    &-\int_0^t y(s)^\top b(s)b(s)^\top y(t) ds  \biggr).
    \end{split}
  \end{align}
  Putting (\ref{eq3-7}), (\ref{eq3-8}) and (\ref{eq3-9}) together, we obtain the desired result.
\end{proof}

\section*{Acknowledgement}
The author is deeply grateful to N. Yoshida for his valuable advice and insightful discussions.

\section*{Funding}
This work was supported by Japan Science and Technology Agency CREST JP-MJCR2115, JSPS KAKENHI Grant Number JP24KJ0667, and RIKEN Special Postdoctoral Researcher Program.

\bibliographystyle{abbrvnat}

%\bibliography{paper-1-arxiv}
\bibliography{arxiv2}

\end{document}